\def\ind{\operatorname{ind}}
\def\Td{\operatorname{Td}}
\def\ch{\operatorname{ch}}
\def\Ell{\operatorname{Ell}}
\def\Mat{\operatorname{Mat}}
\DeclareMathOperator\tr{Tr}
\def\dim{\operatorname{dim}}
\def\det{\operatorname{det}\nolimits}
\DeclareMathOperator\Con{Con}
\newtheorem{theorem}{Theorem}
\newtheorem{lemma}{Lemma}
\newtheorem{prop}{Proposition}
\theoremstyle{definition}
\newtheorem{definition}{Definition}
\title{Nonlocal elliptic problems  associated with  actions of discrete groups on manifolds with boundary}
\author{Boltachev~A.V., Savin~A.Yu.}
\date{}
\begin{document}
\clearpage
\maketitle

\begin{abstract}
Given a manifold with boundary endowed with an action of a discrete group on it, we consider the algebra of operators generated by elements in the Boutet de Monvel algebra of pseudodifferential boundary value problems and shift operators acting on functions on the manifold and  its boundary. Under certain conditions on the group action, we construct a Chern character for elliptic elements in this algebra with values in a de Rham type cohomology of the fixed point manifolds for te group action and obtain an index formula in terms of this Chern character.
\end{abstract}

\section{Introduction}
The aim of this work is to solve the index problem for nonlocal elliptic boundary value problems associated with   actions of discrete groups on manifolds with boundary.

The index problem in  elliptic theory (see~\cite{AtSi1}) consists in computing indices of  elliptic operators in terms of topological invariants of the principal symbol of   operators and topological invariants of  manifolds on which the operators are defined.

Index formulas are known in many geometric situations. In particular, many  authors developed index theory for elliptic boundary value problems in the framework of classical boundary value problems (see~\cite{AtBo2, Hor3, WRL1}) and for Boutet de Monvel algebra of pseudodifferential boundary value problems with boundary and coboundary operators (see~\cite{Bout2, Esk1, ReSc1, Fds17, MeNeSc1, MeScSc1}).

Firstly, we note that obtaining index formulas in the case of pseudodifferential boundary value problems was of significant difficulty because principal symbols of the boundary value problems in question are operator functions defined on the cotangent bundle of the boundary of the manifold and it was necessary to construct topological invariants which take into account this operator function. These difficulties were overcame in~\cite{Fds17}, where a	 topological index of elliptic problems in Boutet de Monvel algebra was defined using special regularized traces on the algebra of principal symbols.

Secondly, we note that all known proofs of the index formulas for elliptic boundary value problems use stable homotopies (these homotopies are constructed using $K$-theory of the $C^*$-algebra of symbols), which allow to reduce the boundary value problem to such a form that its index is equal to the index of an operator on the double of the manifold with boundary. In this case,  the index is easy to calculate using Atiyah-Singer formula (see~\cite{Hor3, ReSc1, Fds17, MeScSc1}).

Noncommutative geometry of Connes~\cite{Con1} contributes greately to the development of  index theory. In noncommutative geometry, one usually considers algebras of operators, the principal symbols of which generate essentially noncommutative algebras. The algebras of crossed product type associated with  group actions on manifolds (see~\cite{Con4, CoDu1, AnLe1, AnLe2, LaSu1, CoLa2, PoWa2, Per3, NaSaSt17, Ros6, SaSt30}) appear in many applications. The corresponding equations on the manifold in question include pseudodifferential operators as well as shift operators along the orbits of the group action. For such operators, ellipticity conditions were obtained which provide Fredholm solvability of the problem in Sobolev spaces (see~\cite{AnLe2}). However, index formulas were obtained only in the case of smooth closed manifolds (see~\cite{NaSaSt17, SaSt30, Per3, Per5}). As a first step towards obtaining   index formulas for boundary value problems associated with group actions on manifolds with boundary, a classification (up to stable homotopies) of   elliptic boundary value problems was obtained and $K$-groups of the corresponding symbol algebra were calculated in~\cite{SaSt43}.

In the present paper, we construct the topological index for elliptic boundary value problems on manifolds with boundary  endowed with an isometric action of a discrete group of polynomial growth in the sense of Gromov~\cite{Gro1}. More precisely, for the principal symbol of such a problem we construct the Chern character with values in a de Rham type cohomology of the cotangent bundle of fixed point submanifolds. The definition of Chern character uses traces on the algebra of noncommutative differential forms from~\cite{NaSaSt17} and on regularized traces from~\cite{Fds17} on the algebra of symbols of Boutet de Monvel operators. Unlike the paper~\cite{Fds17}, where a topological index is constructed as a number, we improve this construction and construct a Chern character for the symbol in suitable cohomology groups, and the topological index is obtained from it by taking product with the Todd class of the manifold and integrating over the fundamental cycle.

Let us briefly describe the contents of the paper. In Sec.~2, we recall the definitions related to the Boutet de Monvel algebra of pseudodifferential boundary value problems on a manifold with boundary. In Sec.~3, we study the algebra generated by Boutet de Monvel operators and shift operators associated with isometric actions of discrete groups of polynomial growth on manifolds with boundary. We state an ellipticity condition for the elements in this algebra, their Fredholm property is established. After that we solve the index problem. To this end, we firstly (in Sec.~4) construct cohomology of de Rham type for the manifolds, whose boundary is the total space of a fibration (the cotangent bundle of a manifold with boundary has this structure and this structure  implicitly appears in~\cite{Fds17}). Furthermore, in Sec.~5, we introduce noncommutative differential forms on the boundary and regularized traces on them. These constructions enable us to define the Chern character for elliptic problems as a cohomology class on the cotangent bundle of the manifold of fixed points of the group action, considered as a manifold with fibered boundary (in the sense of Sec.~4). Next, in Sec.~6, we give the definition of the Todd class of the manifold and state the index theorem. The proof of the index theorem is given in Secs.~7 and~8. To this end, we establish a homotopy classification of elliptic boundary value problems, which enables us to make a homotopy of the boundary value problem to a quite simple boundary value problem in a neighborhood of the boundary of the manifold. Next, for such a problem   the index formula can be checked by a reduction of the operator to the double and application of the index formula in~\cite{NaSaSt17} on the double.

The work of the first author was supported by RFBR grant 19-01-00574, the work of the second author was supported by RFBR grant 19-01-00447.

\section{Boutet de Monvel algebra}
Let us recall the main properties of the Boutet de Monvel algebra (for more detailed exposition see~\cite{Bout2, ReSc1, Gru3, Schr3}).

Let $M$ be a smooth compact manifold with boundary $X$. We suppose that $M$ is endowed with a Riemannian metric. In a neighborhood of the boundary we will use local coordinates $x=(x',x_n)$ on $M$, where ${\rm dim}M=n$, $x'=(x_1,\ldots,x_{n-1})$ are the local coordinates on $X$, and $x_n$ is a defining function of the boundary, i.e., the boundary is locally defined by the equation $x_n=0$, while $M$ is defined by the inequality $x_n\geq 0$. Let us consider Boutet de Monvel operators of zero order and type. We write such operators as follows
\begin{equation}
\label{eq-opb1}
\mathcal{D}= \left(
 \begin{array}{cc}
   A+G & C\\ 
   B & A_X 
 \end{array}
\right):
\begin{array}{c}
   L^2(M)\\ 
   \oplus \\
   L^2(X)
 \end{array}
 \longrightarrow
 \begin{array}{c}
   L^2(M)\\ 
   \oplus \\
   L^2(X)
 \end{array},
\end{equation}
where
\begin{itemize}
\item $A$ is a pseudodifferential operator ($\psi$DO) of order zero on $M$, the principal symbol of which satisfies the transmission property (see below);
\item $A_X$ is a $\psi$DO of order zero on $X$;
\item $B,C$ and $G$ are boundary, coboundary and Green operators, respectively.
\end{itemize}
Recall that a classical symbol $a=a(x',x_n,\xi',\xi_n)$ with an asymptotic expansion $a~a_m+a_{m-1}+...$ satisfies the {\em transmission property}, if its order $m\in\mathbb{Z}$ and for all $k\in\mathbb{Z}_+$ and arbitrary multi-index $\alpha=(\alpha_1,...,\alpha_{n-1})\in\mathbb{Z}^{n-1}_+$ the following equality is satisfied for its homogeneous components $a_l$:
$$
D^k_{x_n}D^\alpha_{\xi'}a_l(x',0,0,1)=e^{i\pi(l-|\alpha|)}D^k_{x_n}D^\alpha_{\xi'}a_l(x',0,0,-1),
$$
where
$$
D^\alpha_{\xi'}=\left(-i\frac{\partial }{\partial \xi_1}\right)^{\alpha_1}\cdot ... \cdot \left(-i\frac{\partial }{\partial \xi_{n-1}}\right)^{\alpha_{n-1}}.
$$
The principal symbol of operator \eqref{eq-opb1} is the pair $\sigma(\mathcal{D})=(\sigma_M(\mathcal{D}),\sigma_X(\mathcal{D}))$, where the first component is called the {\em interior symbol} and is a function
\begin{equation}\label{s-bdm0}
 \sigma_M(\mathcal{D})=\sigma(A)\in C^\infty(S^*M),
\end{equation}
where $S^*M=\{(x,\xi)\in T^*M,|\xi|=1\}$ is the cosphere bundle of $M$, where $\sigma(A)$  is the principal symbol of a $\psi$DO $A$. 
The second component is called the {\em boundary symbol} and is an operator function
$$
\sigma_X(\mathcal{D}) \in 
 C^\infty(S^*X,\mathcal{B}(L^2(\mathbb{R}_+)\oplus \mathbb{C}))
\simeq 
 C^\infty(S^*X,\mathcal{B}(\overline H_+\oplus \mathbb{C}))
$$
on the cosphere bundle $S^*X$ of the boundary, where $\mathcal{B}$ is the algebra of bounded operators,
$$
H_+=\mathcal{F}(\mathcal{S}(\overline{\mathbb{R}}_+))
$$
stands for the space of images with the respect to the Fourier transform $\mathcal{F}_{x_n\to \xi_n}$ of the Schwartz space $\mathcal{S}(\overline{\mathbb{R}}_+)$ of smooth rapidly decaying at infinity functions on   $\overline{\mathbb{R}}_+$. Denote the norm closure of  $H_+\subset L^2(\mathbb{R})$ by $\overline{H}_+$.  Similarly, we define the space $H_-$ as
$$
H_-=\mathcal{F}(\mathcal{S}(\overline{\mathbb{R}}_-)).
$$
Next, we define the projection $\Pi_+:H_+\oplus H_-\to H_+$ to the first summand and the continuous functional
$$
\begin{array}{ccc}
\Pi':H_+\oplus H_- & \longrightarrow & \mathbb{C},\vspace{2mm}\\
u(\xi_n) & \longmapsto &\lim_{x_n\to 0+} \mathcal{F}^{-1}_{\xi_n\to x_n}(u(\xi_n)).
\end{array}
$$
Note that if $u\in L^1(\mathbb{R})\cap(H_+\oplus H_-)$, then
\begin{equation}
\label{Pi'u}
\Pi'u=\frac{1}{2\pi}\int_\mathbb{R}u(\xi_n)d\xi_n.
\end{equation}
Let us denote the algebra of {\em boundary symbols} $\sigma_X(\mathcal{D})$ by  $\Sigma_X\subset C^\infty(S^*X,\mathcal{B}(\overline H_+\oplus \mathbb{C}))$. 
To describe the elements of the algebra $\Sigma_X$, we consider smooth functions
\begin{itemize}
\item $b(x',\xi',\xi_n)\in C^\infty(S^*X,H_-)$;
\item $c(x',\xi',\xi_n)\in C^\infty(S^*X,H_+)$;
\item $g(x',\xi',\xi_n,\eta_n)\in C^\infty(S^*X,H_+\otimes H_-)$;
\item $q(x',\xi')\in C^\infty(S^*X)$.
\end{itemize}
Here the spaces $H_\pm$ and their topological tensor products are considered as Fr\'echet spaces. An arbitrary boundary symbol $a_X\in \Sigma_X$ is a smooth operator family
\begin{equation}
\label{eq-spb1}
a_X (x',\xi')= \left(
 \begin{array}{cc}
   \Pi_+a(x',0,\xi',\xi_n)+\Pi'_{\eta_n}g(x',\xi',\xi_n,\eta_n) & c(x',\xi',\xi_n)\\[2mm]
   \Pi'_{\xi_n}b(x',\xi',\xi_n) &  q(x',\xi') 
 \end{array}
\right):
\begin{array}{c}
   \overline{H}_+\\ 
   \oplus \\
   \mathbb{C}
 \end{array}
 \longrightarrow
 \begin{array}{c}
   \overline{H}_+\\ 
   \oplus \\
   \mathbb{C}
 \end{array}
\end{equation}
with the parameters $(x',\xi')\in S^*X$. Here $a(x',0,\xi',\xi_n)$ is the restriction  to the boundary of a symbol homogeneous of order zero with the transmission property on $M$. The function $a(x',0,\xi',\xi_n)$ is called the {\em principal symbol} of the boundary symbol $a_X$. The operator of type \eqref{eq-spb1} acts on the pair $h\in \overline H_+, v\in\mathbb{C}$ as follows:
\begin{equation}
\label{a_X_act}
a_X(x',\xi')
\left(
\begin{matrix}
h \\
v
\end{matrix}
\right)
=
\left(
\begin{matrix}
\Pi_+(a(x',0,\xi',\xi_n)h(\xi_n))+\Pi'_{\eta_n}(g(x',\xi',\xi_n,\eta_n)h(\eta_n)) + c(x',\xi',\xi_n)v \\[2mm]
\Pi'_{\xi_n}(b(x',\xi',\xi_n)h(\xi_n)) + q(x',\xi')v
\end{matrix}
\right)
\end{equation}
It is known that   smooth families in \eqref{eq-spb1} and \eqref{a_X_act} form an algebra. 

Let us denote the algebra of matrices  \eqref{eq-opb1} by $\Psi_B(M)\subset \mathcal{B}(L^2(M)\oplus L^2(X))$. This algebra is called the {\em Boutet de Monvel algebra}.
\begin{theorem}
(\cite{ReSc1}, Section 2.2.4.4, Corollary 2) The symbol mapping
$$
\begin{array}{ccc}
\Psi_B(M) & \longrightarrow & C^\infty(S^*M)\oplus C^\infty(S^*X,\mathcal{B}(L^2(\mathbb{R}_+)\oplus \mathbb{C}))\\
\mathcal{D} & \longmapsto &(\sigma_M(\mathcal{D}),\sigma_X(\mathcal{D}))
\end{array}
$$
is well defined and continuously extends to a monomorphism of $C^*$-algebras
$$
\overline{\Psi_B(M)} \longrightarrow  C(S^*M)\oplus C(S^*X,\mathcal{B}(L^2(\mathbb{R}_+)\oplus \mathbb{C})).
$$
\end{theorem}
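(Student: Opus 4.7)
The plan is to verify in three stages that $\sigma=(\sigma_M,\sigma_X)$ is (i) a well-defined $*$-homomorphism on $\Psi_B(M)$, (ii) continuous in the operator norm, and (iii) injective after extension to $\overline{\Psi_B(M)}$.

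For (i), one invokes the composition and adjoint formulas of the Boutet de Monvel calculus (see \cite{ReSc1, Schr3}): the product of two order-and-type-zero operators of the form \eqref{eq-opb1} is again of that form, and both symbol components are multiplicative, the boundary symbol composing as an operator product on $\overline H_+\oplus\mathbb{C}$. The verification uses the projection identity $\Pi_+^2=\Pi_+$, the transmission property, and the combinatorics of the cross terms (for instance, $B\circ C$ produces a $\psi$DO on $X$ whose symbol is computed via $\Pi'$ applied to the product $bc$). Adjoints are checked analogously, using that the formal adjoint of a Green, boundary, or coboundary operator belongs to the same class with symbol obtained by pointwise conjugation.

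For (ii), the pointwise bounds
\begin{equation*}
|\sigma_M(\mathcal{D})(x_0,\xi_0)|\leq\|\mathcal{D}\|,\qquad \|\sigma_X(\mathcal{D})(x_0',\xi_0')\|_{\mathcal{B}(\overline H_+\oplus\mathbb{C})}\leq\|\mathcal{D}\|
\end{equation*}
are obtained by phase-space localization. For an interior point this is the classical wave-packet argument: test $\mathcal{D}$ against normalized states concentrating at $x_0$ with oscillating frequency $\lambda\xi_0$ and let $\lambda\to\infty$, so that $\langle \mathcal{D} u_\lambda,u_\lambda\rangle\to\sigma_M(\mathcal{D})(x_0,\xi_0)$. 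For a boundary point, one rescales only tangentially and leaves the normal variable as a free parameter, using arbitrary Schwartz profiles on $\overline{\mathbb{R}}_+$ as test states; the dilated conjugates of $\mathcal{D}$ then converge strongly on $\overline H_+\oplus\mathbb{C}$ to $\sigma_X(\mathcal{D})(x_0',\xi_0')$, which yields the boundary bound. Since the bounds are uniform, $\sigma$ is continuous and admits a unique $*$-homomorphic extension $\widetilde\sigma$ to $\overline{\Psi_B(M)}$.

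The main obstacle is (iii): proving $\ker\widetilde\sigma=0$. The strategy is to show that every nonzero element of $\overline{\Psi_B(M)}$ is detected either by the interior symbol at some $(x_0,\xi_0)\in S^*M$ or by the boundary symbol at some $(x_0',\xi_0')\in S^*X$. A partition of unity subordinate to an interior chart and a boundary collar reduces the question to local model operators on $\mathbb{R}^n$ or the half-space, where faithfulness of the symbol representation can be verified directly. The crucial point is that the boundary symbol takes values in the full algebra $\mathcal{B}(\overline H_+\oplus\mathbb{C})$ rather than its Calkin quotient, so that the singular Green contribution—compact on $L^2(\mathbb{R}_+)$ but generally nonzero—is retained and provides the separation of points which would otherwise be lost. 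Coordinating the interior and boundary detections across the overlap collar, and passing from positive elements to general ones via the $C^*$-identity $\|\mathcal{D}^*\mathcal{D}\|=\|\mathcal{D}\|^2$, completes the proof.
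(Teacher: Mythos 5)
The paper does not actually prove this statement---it is cited verbatim from Rempel--Schulze \cite{ReSc1}, so there is no internal proof to compare yours against. Evaluating your argument on its own merits: parts (i) and (ii) follow the correct and standard path. The composition and adjoint formulas of the Boutet de Monvel calculus give the $*$-homomorphism property, and oscillatory testing (interior wave packets, together with tangential rescaling with Schwartz profiles in the normal variable at the boundary) gives the sup-norm bounds $\|\sigma_M(\mathcal{D})\|_\infty\le\|\mathcal{D}\|$ and $\|\sigma_X(\mathcal{D})\|_\infty\le\|\mathcal{D}\|$, hence the continuous extension $\widetilde\sigma$ to the $C^*$-closure.

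Part (iii), however, contains a genuine gap: the extension $\widetilde\sigma$ cannot have trivial kernel on $\overline{\Psi_B(M)}$. Take $\phi,\psi\in C_c^\infty(M\setminus X)$ and let $K$ be the rank-one operator $u\mapsto\phi\,\langle u,\psi\rangle$ on $L^2(M)$. Then $K$ is a smoothing operator, hence $K\in\Psi_B(M)$; its interior principal symbol vanishes because every homogeneous component of its full symbol is zero, and its boundary symbol vanishes because its Schwartz kernel is supported away from $X$; yet $K\ne0$. So the ``separation of points'' you invoke fails already on the dense subalgebra $\Psi_B(M)$, let alone on the closure, and your appeal to faithfulness of local model representations cannot repair this, since the same obstruction occurs on $\mathbb{R}^n$. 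Your remark that $\sigma_X$ takes values in $\mathcal{B}(\overline H_+\oplus\mathbb{C})$ rather than the Calkin quotient is correct and important---it is precisely why nontrivial singular Green contributions concentrated near $X$ are detected---but it does not and cannot detect compacts concentrated in the interior. What is actually true, and what the Rempel--Schulze statement must be read as asserting, is that $\ker\widetilde\sigma$ equals the ideal of compact operators on $L^2(M)\oplus L^2(X)$ inside $\overline{\Psi_B(M)}$, equivalently that $\widetilde\sigma$ descends to a monomorphism on the quotient by compacts, equivalently that $\max\bigl(\|\sigma_M(\mathcal{D})\|_\infty,\|\sigma_X(\mathcal{D})\|_\infty\bigr)$ equals the essential norm of $\mathcal{D}$. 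A correct proof should aim at establishing that equality of norms (or the identification of the kernel with the compacts), not at injectivity of $\widetilde\sigma$ on $\overline{\Psi_B(M)}$ itself.
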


The regularized trace $\tr'$ of a boundary symbol$a_X\in\Sigma_X$ is defined by the formula
\begin{equation}
\label{reg_trace}
\tr'a_X(x',\xi') = \Pi'_{\eta_n} g(x',\xi',\eta_n,\eta_n) + q(x',\xi').
\end{equation}
The mapping $\tr'$  does not possess the  trace property. More precisely, the following trace defect formula was obtained in   \cite[Section 2.4, Lemma 2.1]{Fds17}:
given   $a_{X,1}, a_{X,2}\in \Sigma_X$, we have  
\begin{equation}
\label{FedLem0}
\tr'[a_{X,1}, a_{X,2}] = -i\Pi'\left(\frac{\partial a_1(\xi_n)}{\partial\xi_n}a_2(\xi_n)\right) = 
i\Pi'\left(a_1(\xi_n)\frac{\partial a_2(\xi_n)}{\partial\xi_n}\right), 
\end{equation}
where $a_1, a_2$ are the principal symbols of   $a_{X,1}, a_{X,2}$ respectively.

\section{$\Gamma$-Boutet de Monvel operators. Fredholm property}

\paragraph{Group actions and shift operators.}
Let $\Gamma$ be a discrete finitely generated group of isometries $\gamma\colon M\to M$, which preserve the boundary $\gamma(X)=X$. Given $\gamma\in \Gamma$, we define the shift operator
$$
T_\gamma:L^2(M) \oplus L^2(X)\longrightarrow L^2(M) \oplus L^2(X),\quad (u(x),v(x'))\longmapsto (u(\gamma^{-1}(x)),v(\gamma^{-1}(x'))).
$$
This operator is unitary if we equip the $L^2$-spaces with the norms, defined by the volume forms  associated with the Riemannian metric. The mapping
$\gamma\mapsto T_\gamma$ defines a unitary representation of $\Gamma$ on $L^2(M) \oplus L^2(X)$.
 
It is known that compositions $T_\gamma\mathcal{D}T_\gamma^{-1}$, where $\mathcal{D}$ is a Boutet de Monvel operator and $\gamma\in \Gamma$, is also a Boutet de Monvel operator. Moreover, the interior and boundary symbols of $T_\gamma\mathcal{D}T_\gamma^{-1}$ are equal to
$$
\sigma_M(T_\gamma\mathcal{D}T_\gamma^{-1})(x,\xi)=\sigma_M(\mathcal{D})(\partial\gamma^{-1}(x,\xi)),
\quad \sigma_X(T_\gamma\mathcal{D}T_\gamma^{-1})(x',\xi')=\sigma_X(\mathcal{D})(\partial\gamma^{-1}(x',\xi')).
$$
Here the actions of $\Gamma$ on $M$ and $X$ are lifted to the bundles ${T}^*M$ and $T^*X$ using codifferentials $\partial \gamma=(d\gamma^t)^{-1}$
of the corresponding diffemorphisms (here $d\gamma$ is the differential of $\gamma$, while $d\gamma^t$ is its dual mapping of the cotangent bundle).

\paragraph{$\Gamma$-Boutet de Monvel operators.}
Let us recall the definition of smooth crossed products (see \cite{Schwe1} or \cite{SaSt30}). Let $\mathcal{A}$ be a Fr\'echet algebra with seminorms $\|\cdot\|_m$, $m\in\mathbb{N}$, and $\Gamma$ be a group of polynomial growth in the sense of Gromov (see \cite{Gro1}), acting on $\mathcal{A}$ by automorphisms $a\mapsto \gamma(a)$, where $a\in \mathcal{A}$ and $\gamma\in\Gamma$. Then the {\em smooth crossed product} denoted by $\mathcal{A}\rtimes \Gamma$ is the vector space of functions $f:\Gamma\to \mathcal{A}$, which rapidly decay at infinity in the sence of the following estimates:
$$
\|f(\gamma)\|_m\le C_{m,N}(1+|\gamma|)^{-N}
$$
for all $N,m\in\mathbb{N}$ and $\gamma\in\Gamma$, where the constant $C_{m,N}$ does not depend on $\gamma$. Here $|\gamma|$ is the length $\gamma$ in the word metric on $\Gamma$. Finally, we assume that the action of $\Gamma$ on $\mathcal{A}$ satisfies the following property: given $m\in\mathbb{N}$, there exists $k\in\mathbb{N}$ and a real polynomial $P(z)$  such that the following inequality
$$
\|\gamma(a)\|_m\le P(|\gamma|)\|a\|_k
$$ 
holds for all $a$ and $\gamma$. 
The product in $\mathcal{A}\rtimes \Gamma$ is defined by the formula: 
\begin{equation}
\label{mult}
\{f_1(\gamma)\}\cdot \{f_2(\gamma)\}=\left\{\sum_{\gamma_1\gamma_2=\gamma}f_1(\gamma_1)\gamma_1(f_2(\gamma_2))\right\}.
\end{equation}
It can be shown that the right hand side in \eqref{mult} is an element of $\mathcal{A}\rtimes \Gamma$, i.e., this space is an algebra. 

The action of $\Gamma$ on Fr\'echet algebras $C^\infty(S^*M), \Sigma_X$ and $\Psi_B(M)$ satisfies the conditions above, and the following smooth crossed products are defined: $C^\infty(S^*M)\rtimes \Gamma, \Sigma_X\rtimes \Gamma$ and $\Psi_B(M)\rtimes \Gamma$. Elements in the smooth crossed product $\Psi_B(M)\rtimes \Gamma$ define operators  
\begin{equation}
\label{eq-op1-d}
\left\{\mathcal{D}_\gamma\right\}_{\gamma\in\Gamma}\in\Psi_B(M)\rtimes \Gamma\quad\longmapsto\quad\sum_{\gamma\in \Gamma} \mathcal{D}_\gamma T_\gamma: L^2(M)\oplus L^2(X)\to L^2(M)\oplus L^2(X) 
\end{equation}

\begin{definition}
Operators in \eqref{eq-op1-d} are called $\Gamma$-{\em Boutet de Monvel operators}.
\end{definition}

\begin{definition}
The {\em symbol} of   operator \eqref{eq-op1-d} is a pair $\sigma(\mathcal{D})=(\sigma_M(\mathcal{D}),\sigma_X(\mathcal{D}))$, which consists of the interior and the boundary symbols
\begin{equation}\label{s-bdm1}
 \sigma_M(\mathcal{D})=\{\sigma(A_\gamma)\}_{\gamma\in\Gamma}\in C^\infty(S^*M)\rtimes \Gamma, \quad \sigma_X(\mathcal{D})=\{\sigma_X(\mathcal{D}_\gamma)\}_{\gamma\in\Gamma}\in \Sigma_X\rtimes \Gamma.
\end{equation}
\end{definition}
The operators \eqref{eq-op1-d} form an algebra and symbols \eqref{s-bdm1} enjoy the composition formula: given $\mathcal{D}_1,\mathcal{D}_2\in\Psi_B(M)\rtimes \Gamma$ we have $\sigma_M(\mathcal{D}_1\mathcal{D}_2)=\sigma_M(\mathcal{D}_1)\sigma_M(\mathcal{D}_2)$ and $\sigma_X(\mathcal{D}_1 \mathcal{D}_2)=\sigma_X(\mathcal{D}_1)\sigma_X(\mathcal{D}_2)$.

\paragraph{Operators, acting between ranges of projections.}
A matrix $\Gamma$-{\em Boutet de Monvel operator} is a triple $(\mathcal{D}, \mathcal{P}_1, \mathcal{P}_2)$ such that
\begin{itemize}
\item $ \mathcal{D}\in{\rm Mat}_N(\Psi_B(M)\rtimes \Gamma)$ 
is a matrix   with the components in $\Psi_B(M)\rtimes \Gamma $;
\item  
$ \mathcal{P}_j\in{\rm Mat}_N(C^\infty(M)\rtimes \Gamma\oplus C^\infty(X)\rtimes \Gamma),\; j=1,2; $ 
\item the following relation is satisfied $\mathcal{P}_2\mathcal{D}\mathcal{P}_1=\mathcal{D}.$
\end{itemize}
Then we define the operator
\begin{equation}\label{eq-2a}
\mathcal{D}:
\mathcal{P}_1(L^2(M,\mathbb{C}^N)\oplus L^2(X,\mathbb{C}^N))
\longrightarrow 
\mathcal{P}_2(L^2(M,\mathbb{C}^N)\oplus L^2(X,\mathbb{C}^N)).
\end{equation}
Denote operator \eqref{eq-2a} by $(\mathcal{D}, \mathcal{P}_1, \mathcal{P}_2)$. 

We also introduce the notation
$$
P_j=\sigma_M(\mathcal{P}_j),\qquad P'_j=\sigma_X(\mathcal{P}_j).
$$

\begin{definition}
\label{def_elliptic}
Operator $(\mathcal{D}, \mathcal{P}_1, \mathcal{P}_2)$ is   {\em elliptic}, if there exists a matrix operator $(\mathcal{R}, \mathcal{P}_2, \mathcal{P}_1)$ such that the following equalities hold
$$
\sigma(\mathcal{D})\sigma(\mathcal{R})=\sigma(\mathcal{P}_2),\qquad\sigma(\mathcal{R})\sigma(\mathcal{D})=\sigma(\mathcal{P}_1).
$$
\end{definition}

\begin{theorem}
An elliptic operator \eqref{eq-2a} has Fredholm property.
\end{theorem}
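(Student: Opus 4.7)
The plan is a standard parametrix argument, the key analytic input being that elements of $\Psi_B(M)\rtimes\Gamma$ with vanishing symbol act as compact operators on $L^2(M)\oplus L^2(X)$.

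First, by Definition~\ref{def_elliptic} there exists $(\mathcal{R},\mathcal{P}_2,\mathcal{P}_1)$ with $\sigma(\mathcal{D})\sigma(\mathcal{R})=\sigma(\mathcal{P}_2)$ and $\sigma(\mathcal{R})\sigma(\mathcal{D})=\sigma(\mathcal{P}_1)$. I would put $\widetilde{\mathcal{R}}=\mathcal{P}_1\mathcal{R}\mathcal{P}_2$. Using multiplicativity of the symbol map on $\Psi_B(M)\rtimes\Gamma$ (stated just before Definition~\ref{def_elliptic}), the operators $\mathcal{D}\widetilde{\mathcal{R}}-\mathcal{P}_2$ and $\widetilde{\mathcal{R}}\mathcal{D}-\mathcal{P}_1$ both have zero interior and boundary symbol. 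Once these remainders are known to be compact on $L^2(M,\mathbb{C}^N)\oplus L^2(X,\mathbb{C}^N)$, their restrictions to the ranges of $\mathcal{P}_1$ and $\mathcal{P}_2$ are still compact and equal to $\mathcal{D}\widetilde{\mathcal{R}}-\text{id}$ and $\widetilde{\mathcal{R}}\mathcal{D}-\text{id}$ there; Atkinson's theorem then gives the Fredholm property.

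The heart of the matter is thus showing that an arbitrary element $\mathcal{K}=\sum_{\gamma\in\Gamma}\mathcal{K}_\gamma T_\gamma\in\Psi_B(M)\rtimes\Gamma$ with $\sigma_M(\mathcal{K})=0$ and $\sigma_X(\mathcal{K})=0$ is compact. Vanishing of the symbol in the smooth crossed product means that each Fourier coefficient $\mathcal{K}_\gamma$ has zero interior and boundary symbol in $\Psi_B(M)$. By the classical Boutet de Monvel theory recalled in Section~2 (kernel of the symbol map consists of smoothing operators of negative type, which are compact on $L^2(M)\oplus L^2(X)$), each $\mathcal{K}_\gamma$ is compact, and its operator norm is dominated by some Fr\'echet seminorm $\|\mathcal{K}_\gamma\|_m$.

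It remains to show the series converges in operator norm. Since $T_\gamma$ is unitary, $\|\mathcal{K}_\gamma T_\gamma\|\le \|\mathcal{K}_\gamma\|_m$; the rapid-decay bound $\|\mathcal{K}_\gamma\|_m\le C_{m,N}(1+|\gamma|)^{-N}$ combined with the polynomial growth estimate $\#\{\gamma:|\gamma|\le R\}\le CR^d$ gives $\sum_\gamma \|\mathcal{K}_\gamma T_\gamma\|<\infty$ for $N>d+1$. Thus $\mathcal{K}$ is a norm limit of finite sums of compact operators, and hence compact. The main obstacle is exactly this convergence step, where it is essential both that $\Gamma$ has polynomial growth and that the crossed product is the smooth (rapidly decaying) one rather than, say, merely algebraic or purely $L^1$: if either hypothesis is weakened, the series of compact operators might fail to converge in the norm topology and the simple parametrix argument would have to be replaced by a more delicate analysis at the level of a suitable $C^*$-completion.
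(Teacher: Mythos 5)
Your proposal is correct and is exactly the standard parametrix argument the paper has in mind: the paper itself does not spell out a proof but instead cites~\cite{AnLe2}, where this type of argument (parametrix modulo compacts, with compactness of the zero-symbol remainder obtained by decomposing into Fourier coefficients over $\Gamma$, using compactness in the classical Boutet de Monvel algebra for each coefficient and absolute norm convergence of the rapidly decaying series thanks to polynomial growth) is carried out. A couple of cosmetic remarks: since the almost-inverse already satisfies $\mathcal{P}_1\mathcal{R}\mathcal{P}_2=\mathcal{R}$ by the definition of a matrix $\Gamma$-Boutet de Monvel operator, setting $\widetilde{\mathcal{R}}=\mathcal{P}_1\mathcal{R}\mathcal{P}_2$ is redundant; and the phrase ``smoothing operators of negative type'' should just read ``operators of order $-1$ (and type $0$)'', which are the compact elements in the kernel of the Boutet de Monvel symbol map. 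You are also right to emphasize that the convergence step is where both the polynomial growth of $\Gamma$ and the smooth (rapidly decaying) crossed product are genuinely used; one implicitly needs that the operator norm is dominated by a Fr\'echet seminorm of $\Psi_B(M)$, which holds for the standard symbol seminorms by Calder\'on--Vaillancourt-type estimates.
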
 
The proof is standard (see, for instance,~\cite{AnLe2}).

\section{Two de Rham complexes for manifolds with fibered boundary}

Let us consider a Riemannian manifold $M$ with boundary $\partial M$ and suppose that the boundary is a total space of a locally trivial fiber bundle $\pi:\partial M\to X$ with the fiber $F$. Then the pair $(M,\pi)$ is called a {\em manifold with fibered boundary}. 

The embedding $i:\partial M\to M$ induces the restriction mapping $i^*:\Omega^*(M)\to \Omega^*(\partial M)$ of the differential forms to the boundary. The projection $\pi$ defines the induced mapping $\pi^*:\Omega^*(X)\to \Omega^*(\partial M)$ and the mapping
$$
\pi_*:\Omega_c^*(\partial M)\longrightarrow \Omega_c^{*-\nu}(X),\quad \nu=\dim F,
$$
of integration of compactly supported differential forms over the fiber $F$ (see, for instance,~\cite{BGV1}). Here we suppose that the fibers of $\pi$ have orientation  continuously depending on the point of the base. Let us recall the definition of the integral of form over the fiber.
\begin{definition}
The integral of a form $\omega\in\Omega_c^k(\partial M)$ over the fibers of $\pi:\partial M\to X$ is a differential form $\pi_*{\omega}\in\Omega_c^{k-\nu}(X)$  such that 
$$
\int\limits_{X}{\Big(\pi_*{\omega}\Big)\wedge\omega_1} = \int_{\partial M}{\omega}\wedge\pi^*\omega_1
$$
for all differential forms $\omega_1$ on $X$. 
The following properties are valid:
\begin{enumerate}
\item $\pi_*(\omega\wedge\pi^*\omega_1)=(\pi_*\omega)\wedge\omega_1,$ for all forms $\omega\in\Omega_c^k(\partial M), \omega_1\in\Omega_c^k(X)$;
\item $d(\pi_*\omega) = (-1)^\nu\pi_*(d\omega)$, for all forms $\omega\in\Omega_c^k(\partial M)$.
\end{enumerate} 
\end{definition}

Let us consider the graded morphism
\begin{equation}
\label{eq-grmor1}
(\Omega_c^*(M),d) \stackrel{\pi_*i^*}\longrightarrow (\Omega_c^{*-\nu}(X),d),\quad d\pi_*i^*=(-1)^\nu\pi_*i^*d 
\end{equation}
of the de Rham complexes on $M$ and $X$. Denote the cone of $\pi_*i^*$  by $(\Omega^*(M,\pi),\partial)$, where
\begin{equation}\label{eq-c1}
 \Omega_c^j(M,\pi)=\Omega_c^j(M)\oplus\Omega_c^{j-\nu-1}(X), \quad
 \partial=\left(
            \begin{array}{cc}
             d & 0\\
             \pi_*i^*& (-1)^{\nu+1} d
            \end{array}\right).
\end{equation}
Note that
\begin{multline*}
 \partial^2=\left(
            \begin{array}{cc}
             d & 0\\
             \pi_*i^*& (-1)^{\nu+1} d
            \end{array}\right)
						\left(
            \begin{array}{cc}
             d & 0\\
             \pi_*i^*& (-1)^{\nu+1} d
            \end{array}\right) =
						\left(
            \begin{array}{cc}
             d^2 & 0\\
             \pi_*i^*d+(-1)^{\nu+1} d(\pi_*i^*)& (-1)^{2\nu+2} d^2
            \end{array}\right) 
\\
						=\left(
            \begin{array}{cc}
             0 & 0\\
             \pi_*i^*d+(-1)^{\nu+1} d\pi_*i^*& 0
            \end{array}\right)
						=
						0.
\end{multline*}
The last equality follows from~\eqref{eq-grmor1}.

We denote the cohomology groups of the complex $(\Omega_c^*(M,\pi),\partial)$ by $H_c^* (M,\pi)$.

Also, we consider the complex $(\widetilde{\Omega}^*(M,\pi),\widetilde\partial)$:
\begin{equation}\label{eq-c2}
 \widetilde{\Omega}^j(M,\pi)=\{(\omega,\omega_X)\in\Omega^j(M)\oplus\Omega^j(X)\;|\; i^*\omega=\pi^*\omega_X\}, \quad
 \widetilde{\partial}=\left(
            \begin{array}{cc}
             d & 0\\
             0& d
            \end{array}\right).  
\end{equation}
Note that $di^*\omega=d\pi^*\omega_X$ since $i^*\omega=\pi^*\omega_X$. Hence, $i^*d\omega=\pi^*d\omega_X$, since $i^*$ and $\pi^*$ are the induced mappings on differential forms, i.e., the mapping $\widetilde{\partial}$ is well defined.
We denote the cohomology groups of the complex $(\widetilde{\Omega}^*(M,\pi),\widetilde{\partial})$ by $\widetilde{H}^*(M,\pi)$.

Component-wise exterior multiplication of differential forms gives the product
$$
\wedge: \Omega_c^j(M,\pi)\times \widetilde{\Omega}^k(M,\pi)\longrightarrow \Omega_c^{j+k}(M,\pi).
$$
This operation enjoys  the Leibniz rule
\begin{equation}
\label{eq-leibniz1}
\partial (a\wedge b)=\partial a\wedge b+(-1)^{j}a\wedge \widetilde\partial b,\quad a\in \Omega_c^j(M,\pi), b\in \widetilde{\Omega}^k(M,\pi).
\end{equation}
Indeed, given 
$$
a=
						\left(
            \begin{array}{c}
             \omega\\
             \omega_X
            \end{array}\right) \quad\text{and}\quad
						b=
						\left(
            \begin{array}{c}
             \omega'\\
             \omega'_X
            \end{array}\right),
$$
we have
\begin{multline*}
\partial (a\wedge b)=\partial\left(
            \begin{array}{c}
             \omega\wedge\omega'\\
             \omega_X\wedge\omega'_X
            \end{array}\right)=
					 \left(
            \begin{array}{c}
             d(\omega\wedge\omega')\\
             \pi_*i^*(\omega\wedge\omega')+(-1)^{\nu+1}d(\omega_X\wedge\omega'_X)
            \end{array}\right)
            \\
		   =
						\left(
            \begin{array}{c}
             d\omega\wedge\omega'+(-1)^j\omega\wedge d\omega'\\
             \pi_*(i^*\omega\wedge i^*\omega')+(-1)^{\nu+1}d\omega_X\wedge\omega'_X+(-1)^{\nu+1}(-1)^{j-\nu-1}\omega_X\wedge d\omega'_X
            \end{array}\right)
						\\
						=
						\left(
            \begin{array}{c}
             d\omega\wedge\omega'\\
             \pi_*i^*(\omega)\wedge\omega'_X+(-1)^{\nu+1}d\omega_X\wedge\omega'_X
            \end{array}\right)
						+
						\left(
            \begin{array}{c}
             (-1)^j\omega\wedge d\omega'\\
             (-1)^j\omega_X\wedge d\omega'_X
            \end{array}\right)
						\\
					 =\partial\left(
            \begin{array}{c}
             \omega\\
             \omega_X
            \end{array}\right)\wedge
						\left(
            \begin{array}{c}
             \omega'\\
             \omega'_X
            \end{array}\right)
						+
						(-1)^j\left(
            \begin{array}{c}
             \omega\\
             \omega_X
            \end{array}\right)\wedge
						\widetilde\partial
						\left(
            \begin{array}{c}
             \omega'\\
             \omega'_X
            \end{array}\right).
\end{multline*}
It follows from the Leibniz rule~\eqref{eq-leibniz1}
 that   $\wedge$ defines a product in cohomology
\begin{equation}
\label{cohom_prod}
\wedge:H_c^j(M,\pi)\times \widetilde{H}_c^k(M,\pi)\longrightarrow H_c^{j+k}(M,\pi).
\end{equation}

Finally, we suppose that $M$ and $X$ are oriented manifolds and their orientations are compatible with the orientation of the fibers in the following way. 
Denote $n={\rm dim}M$ and choose as positively oriented the form
$(-1)^n dt_1\wedge\ldots\wedge dt_\nu\wedge dy_1\wedge\ldots\wedge dy_k\wedge dx_n$, where $t_1,\ldots,t_\nu$ are some positively oriented coordinates on the fiber, $y_1,\ldots,y_{n-\nu-1}$ are some positively oriented coordinates on $X$,
while $x_n\ge 0$ is a defining function of the boundary. 

We define the linear functional
$$
\begin{array}{rcc}
  \displaystyle\langle \cdot,[M,\pi]\rangle:H_c^* (M,\pi) & \longrightarrow & \mathbb{C}\vspace{2mm}\\
   (\omega,\omega_X) & \longmapsto &  \displaystyle\int_M \omega- (-1)^n\int_X \omega_X.
\end{array}
$$ 
To prove that this functional is well defined, we choose forms
  $\omega=f(t,y,x_n)dt_1\wedge\ldots \wedge dy_{n-\nu-1}$ and $\omega_X=g(y)dy_1\wedge \ldots \wedge dy_{n-\nu-1}$ and compute
\begin{multline*}
\langle \partial(\omega,\omega_X),[M,\pi]\rangle=
\int\limits_M d\omega-(-1)^n\int\limits_X \left(\pi_*i^*\omega+(-1)^{\nu+1}d\omega_X\right) 
=\int\limits_{\partial M} i^*\omega-(-1)^n\int\limits_X \left(\pi_*i^*\omega\right)
\\
=(-1)^n\!\!\!\!\int\limits_{\mathbb{R}^\nu\times \mathbb{R}^{n-\nu-1}}\!\!\!\!\!\!\!f(t,y,0)dt_1...dy_{n-\nu-1}-(-1)^n\!\!\!\!\int\limits_{\mathbb{R}^{n-\nu-1}}{\left(\int\limits_{\mathbb{R}^\nu}{f(t,y,0)dt_1...dt_\nu}\right)dy_1...dy_{n-\nu-1}}=0.
\end{multline*}

\section{Chern character of elliptic symbols}
\paragraph{Noncommutative differential forms. Regularized trace.}

In this section, we define the Chern character for symbols of elliptic operators \eqref{eq-2a}. The definition uses noncommutative differential forms which we now define. Denote by
$$
\widetilde\Sigma_X\subset C^\infty(T^*X,\mathcal{B}(\overline H_+\oplus \mathbb{C}))
$$ 
the algebra of boundary symbols on $T^*X$. 


Let us consider the action of $\Gamma$ on the algebras $\Omega(T^*M)$ and $\widetilde\Sigma_X\otimes_{C^\infty(X)}\Omega(T^*X)$ of differential forms with compact support and corresponding smooth crossed products
$$
\Omega(T^*M)\rtimes \Gamma \text{ and }\left(\widetilde\Sigma_X\otimes_{C^\infty(X)}\Omega(T^*X)\right)\rtimes \Gamma.
$$
These crossed products are differential graded algebras with respect to the gradings defined by the gradings of the differential forms and the differentials  defined by the exterior differential on differential forms.

Given $\gamma\in \Gamma$, we define the mappings (cf. \cite{NaSaSt17,SaSt26})
\begin{equation}\label{eq-tr1}
 \tau^{\gamma}:\Omega(T^*M)\rtimes \Gamma \longrightarrow \Omega(T^*M^{\gamma}),
\end{equation}
\begin{equation}\label{eq-tr2}
 \tau^\gamma_{X}:\left(\widetilde\Sigma_X\otimes_{C^\infty(X)}\Omega(T^*X)\right)\rtimes \Gamma \longrightarrow \Omega(T^*X^{\gamma}).
\end{equation}
To define these mappings, we introduce some notations. We denote the closure of $\Gamma$ in the compact Lie group of isometries of $M$ by $\overline{\Gamma}$. This closure is a compact Lie group. 
Let $C^\gamma\subset\overline{\Gamma}$ be the centralizer\footnote{We recall that the centralizer of $\gamma$ is a subgroup of elements which commute with $\gamma$.}
of $\gamma$. The centralizer is a closed Lie subgroup in $\overline{\Gamma}$. The elements of centralizer are denoted by $h$, while the induced Haar measure on the centralizer is denoted as $dh$. Next, for an element $\gamma'\in \langle
\gamma\rangle$ in the conjugacy class of $\gamma$, we fix an arbitrary element $z=z(\gamma,\gamma')$, which conjugates $\gamma$ and
$\gamma'=z\gamma z^{-1}$. Any such element defines the diffeomorphism $\partial z:T^*M^\gamma\to T^*M^{\gamma'}$ for the corresponding fixed point manifolds.

We define the functional in \eqref{eq-tr1} by
\begin{equation}\label{eq-sled-nash1}
    \tau^\gamma(\omega)=
      \sum_{\gamma'\in \langle \gamma\rangle}\;\;\;
        \int_{C^\gamma}
           \Bigl.h^*\bigl(
              {z}^*\omega(\gamma')
             \bigr)\Bigr|_{T^*M^\gamma}
           dh,\quad \text{where }\omega\in \Omega(T^*M)\rtimes \Gamma,
\end{equation}
and the functional \eqref{eq-tr2} by
\begin{equation}\label{eq-sled-nash2}
    \tau^\gamma_X(\omega_X)=
      \sum_{\gamma'\in \langle \gamma\rangle}\;\;\;
        \int_{C^\gamma} \tr_X
				\left(
           \Bigl.h^*\bigl(
              {z}^*\omega_X(\gamma')
             \bigr)\bigr|_{T^*X^\gamma}
         \right) 
					dh,\quad \text{where }\omega_X\in \left(\widetilde\Sigma_X\otimes_{C^\infty(X)}\Omega(T^*X)\right)\rtimes \Gamma.
\end{equation}
Here 
$$
\tr_X \left(
\sum_I \omega_I(t)dt^I\right)= \sum_I \tr'(\omega_I(t))dt^I,
$$
where $\tr':\widetilde\Sigma_X \to C^\infty(T^*X)$ is the regularized trace defined earlier in \eqref{reg_trace}.

\begin{prop} The following assertions hold:
\begin{enumerate}
\item The summands in \eqref{eq-sled-nash1} and \eqref{eq-sled-nash2} do not depend on the choice of the elements $z$.
\item Functionals \eqref{eq-sled-nash1} and \eqref{eq-sled-nash2} have the following properties:
\begin{equation}\label{eq-trcl1}
\tau^\gamma(\omega_1\wedge\omega_2)=(-1)^{\deg\omega_1\deg\omega_2}\tau^\gamma(\omega_2\wedge\omega_1),\text{ for all } \omega_1,\omega_2\in \Omega(T^*M)\rtimes\Gamma,
\end{equation}
\begin{equation}\label{eq-trcl2}
d\tau^\gamma_X(\omega )= \tau^\gamma_X(d\omega ),\quad \text{for all }\omega \in \left(\widetilde\Sigma_X\otimes_{C^\infty(X)}\Omega(T^*X)\right)\rtimes \Gamma.
\end{equation}
\end{enumerate}
\end{prop}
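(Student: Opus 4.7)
Suppose $z_1, z_2$ both conjugate $\gamma$ to $\gamma'$. Then $h_0 := z_1^{-1} z_2$ satisfies $h_0 \gamma h_0^{-1} = z_1^{-1} \gamma' z_1 = \gamma$, i.e.\ $h_0 \in C^\gamma$. Since $z_2 = z_1 h_0$, we have $z_2^* = h_0^* z_1^*$, so the $\gamma'$-summand with $z_2$ becomes
$$
\int_{C^\gamma} h^* h_0^* \bigl(z_1^* \omega(\gamma')\bigr)\big|_{T^*M^\gamma}\, dh = \int_{C^\gamma} (h_0 h)^* \bigl(z_1^* \omega(\gamma')\bigr)\big|_{T^*M^\gamma}\, dh.
$$
Substituting $h' = h_0 h$ and invoking left-invariance of Haar measure on the compact Lie group $C^\gamma$, this recovers the summand with $z_1$. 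The restriction to $T^*M^\gamma$ is compatible with the action of $h_0 \in C^\gamma$ because every element of $C^\gamma$ preserves $M^\gamma$. The same argument applies verbatim to $\tau^\gamma_X$.

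\textbf{Graded trace property of $\tau^\gamma$ (Part 2(a)).} Expanding the wedge product via the crossed-product multiplication,
$$
(\omega_1 \wedge \omega_2)(\delta) = \sum_{\gamma_1 \gamma_2 = \delta} \omega_1(\gamma_1) \wedge \gamma_1(\omega_2(\gamma_2)),
$$
so $\tau^\gamma(\omega_1 \wedge \omega_2)$ is a sum over pairs $(\gamma_1,\gamma_2)$ with $\gamma_1\gamma_2 \in \langle\gamma\rangle$; since $\gamma_1\gamma_2$ and $\gamma_2\gamma_1$ are always conjugate, $\tau^\gamma(\omega_2 \wedge \omega_1)$ is indexed by the same set of pairs. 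The plan is to match the contributions pair by pair. Using Part 1 to choose the conjugating element for $\gamma_2\gamma_1$ as $z_1 \gamma_1^{-1}$, where $z_1$ conjugates $\gamma$ to $\gamma_1\gamma_2$, and making a compensating change of variable $h \mapsto h'$ in the Haar integral, I would reduce the two integrands to pullbacks of wedge products of ordinary differential forms restricted to $T^*M^\gamma$. The essential mechanism is that the action of $\gamma$ is trivial on $T^*M^\gamma$, which allows the extra $\gamma$-shift produced by cycling the factors to disappear. After this reduction the two sides differ only by the order of two ordinary forms on $T^*M^\gamma$, and the classical graded commutativity $\alpha \wedge \beta = (-1)^{\deg\alpha \deg\beta} \beta \wedge \alpha$ yields the desired sign.

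\textbf{Closedness of $\tau^\gamma_X$ (Part 2(b)).} I would verify that $d$ commutes with each operation appearing in \eqref{eq-sled-nash2}. The pullbacks $z^*$ and $h^*$ commute with $d$ by naturality of the exterior derivative; restriction to $T^*X^\gamma$ commutes with $d$ since $d$ is local; and Haar integration $\int_{C^\gamma}\cdot\, dh$ commutes with $d$ on $T^*X^\gamma$ because the integration variable $h$ is independent of the form variables. The one substantive point is that $\tr_X$ commutes with $d$: writing a local representative $\omega_X = \sum_I a_I(x',\xi')\, dt^I$ with $a_I \in \widetilde\Sigma_X$, the differential $d\omega_X$ differentiates only the $\widetilde\Sigma_X$-coefficients in the base variables $(x',\xi')$; since $\tr' a_X = \Pi'_{\eta_n} g(x',\xi',\eta_n,\eta_n) + q(x',\xi')$ is an integration/evaluation in the normal fiber variable $\eta_n$, it commutes with $\partial_{x'}, \partial_{\xi'}$, so $\tr_X d\omega_X = d\,\tr_X \omega_X$.

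\textbf{Main obstacle.} The delicate step is the graded trace property 2(a): the careful bookkeeping of the conjugating elements $z$ together with the compensating substitution in $C^\gamma$ is where all the work lies, and Part 1 is an essential prerequisite because one must freely modify $z$ on each summand. By contrast $\tau^\gamma_X$ is \emph{not} a trace — this reflects the fact that $\tr'$ is only a regularized trace, with defect given by \eqref{FedLem0} — which is why only closedness, and not cyclicity, is asserted for the boundary functional.
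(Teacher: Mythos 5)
Your proposal takes the same route as the paper in all three parts: left translation by $h_0=z_1^{-1}z_2\in C^\gamma$ together with Haar invariance for Part~1; for Part~2(b), commuting $d$ past the pullbacks $z^*,h^*$, past restriction to $T^*X^\gamma$, past Haar integration, and past $\tr_X$ (which acts only in the normal variable $\eta_n$); and for Part~2(a), reducing to simple tensors supported on single group elements and exploiting that $\gamma^*$ acts trivially on forms restricted to $T^*M^\gamma$ together with $h^*\gamma^*=\gamma^*h^*$ for $h\in C^\gamma$. Two small remarks on the sketch of Part~2(a): the conjugating element for $\gamma_2\gamma_1$ should be $\gamma_1^{-1}z_1$ (or $\gamma_2 z_1$), not $z_1\gamma_1^{-1}$ -- these coincide only when $z_1=e$, which is exactly the case the paper works out -- and the paper's actual manipulation inserts $\gamma^*$ into the second factor and commutes it past $h^*$ rather than making a change of variable in the Haar integral, though both devices rest on the same properties of $C^\gamma$. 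Your closing observation that cyclicity is not asserted for $\tau^\gamma_X$ precisely because of the trace defect \eqref{FedLem0} is accurate and is indeed the point of Lemma~\ref{FedLem} later on.
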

\begin{proof}
Let us prove 1. Indeed, let $z_1$ be another element such that $\gamma'=z_1\gamma z_1^{-1}$. Then for the functional \eqref{eq-sled-nash1} we have:
$$
        \int_{C^\gamma}
           \Bigl.h^*\bigl(
              z_1^*\omega(\gamma')
             \bigr)\Bigr|_{T^*M^\gamma}
           dh=
        \int_{C^\gamma}
           \Bigl.(z_1h)^*\bigl(
              \omega(\gamma')
             \bigr)\Bigr|_{T^*M^\gamma}
           dh=
					 \int_{C^\gamma}
           \Bigl.h^*\bigl(z^*
              \omega(\gamma')
             \bigr)\Bigr|_{T^*M^\gamma}
           dh.
$$
Here in the last equality we made the change of variable $z_1h=zh'$ in the integral and used the invariance of the Haar measure. Equality \eqref{eq-sled-nash2} is proved similarly.

Now let us move on to part 2. 
Firstly, we show that $\tau^\gamma$ is a graded trace, i.e., equality \eqref{eq-trcl1} holds. It suffices to prove this property for the following forms $\omega_1, \omega_2$:
$$
\omega_1(\gamma)=\left\{
\begin{matrix}
a, &\gamma=\gamma_1,\\
0, &\gamma\ne\gamma_1,
\end{matrix}
\right.
\qquad
\omega_2(\gamma)=\left\{
\begin{matrix}
b, &\gamma=\gamma_2,\\
0, &\gamma\ne\gamma_2.
\end{matrix}
\right.
$$
Then
$$
(\omega_1\wedge\omega_2)(\gamma)=\left\{
\begin{matrix}
0, &\gamma\ne\gamma_1\gamma_2,\\
a\wedge{\gamma_1^*}^{-1}b, &\gamma=\gamma_1\gamma_2.
\end{matrix}
\right.
$$
Since functional \eqref{eq-sled-nash1} does not depend on the choice of $z$, we can consider $z=e\in\Gamma$. 

On the one hand, for $\gamma=\gamma_1\gamma_2$ we have
\begin{equation}
\label{pr_indep_1}
\tau^\gamma(\omega_1\wedge\omega_2)=
        \int_{C^\gamma}\Bigl.h^*\Bigl(
				a\wedge{\gamma_1^*}^{-1}b\Bigr)\Bigr|_{T^*M^\gamma}dh=
\int_{C^\gamma}\Bigl.(h^*
				a\wedge h^*{\gamma_1^*}^{-1}b)\Bigr|_{T^*M^\gamma}dh=
				\int_{C^\gamma}\Bigl.(h^*
				a\wedge \gamma^*h^*{\gamma_1^*}^{-1}b)\Bigr|_{T^*M^\gamma}dh,
\end{equation}
since the form is integrated over ${T^*M^\gamma}$ and over this manifold we have $\gamma^*h^*=h^*$. 
Since $h\in C^\gamma$ in \eqref{pr_indep_1}, we have $h^*\gamma^*=\gamma^*h^*$. We then transform \eqref{pr_indep_1} as
\begin{equation}
\label{pr_indep_2}
\tau^\gamma(\omega_1\wedge\omega_2)=
\int_{C^\gamma}\Bigl.h^*
				a\wedge h^*\gamma^*{\gamma_1^*}^{-1}b\Bigr|_{T^*M^\gamma}dh=
\int_{C^\gamma}\Bigl.h^*\Bigl(
				a\wedge \gamma_2^*b\Bigr)\Bigr|_{T^*M^\gamma}dh,
\end{equation}
since $\gamma=\gamma_1\gamma_2$ and $\gamma^*{\gamma_1^*}^{-1}=\gamma_2^*$. 

On the other hand, for $z=\gamma_2$, we have:
$$
\tau^\gamma(\omega_2\wedge\omega_1)=
\int_{C^\gamma}\Bigl.h^*\Bigl(z^*\Bigl(b\wedge{\gamma_2^*}^{-1}a\Bigr)\Bigr)\Bigr|_{T^*M^\gamma}dh=
(-1)^{{\rm deg}\omega_1{\rm deg}\omega_2}\int_{C^\gamma}\Bigl.h^*\Bigl(z^*\Bigl(
				{\gamma_2^*}^{-1}a\wedge b\Bigr)\Bigr)\Bigr|_{T^*M^\gamma}dh
$$
$$
=(-1)^{{\rm deg}\omega_1{\rm deg}\omega_2}\int_{C^\gamma}\Bigl.h^*\Bigl(
				a\wedge \gamma_2^*b\Bigr)\Bigr|_{T^*M^\gamma}dh=
(-1)^{{\rm deg}\omega_1{\rm deg}\omega_2}\tau^\gamma(\omega_1\wedge\omega_2).
$$
Here in the last equality we used \eqref{pr_indep_2}. Now we prove equality \eqref{eq-trcl2}:
\begin{multline*}
\tau^\gamma_X(d\omega_X)=
      \sum_{\gamma'\in \langle \gamma\rangle}\;
        \int_{C^\gamma} \tr_X
				\left(
           \Bigl.h^*\bigl(
              {z}^*(d\omega_X(\gamma'))
             \bigr)\bigr|_{T^*X^\gamma}
        \right) 
					dh
\\
				=\sum_{\gamma'\in \langle \gamma\rangle}\;
        \int_{C^\gamma} \tr_X
				\left(
           \Bigl.d\bigl(h^*\bigl(
              {z}^*(\omega_X(\gamma'))
             \bigr)\bigr)\bigr|_{T^*X^\gamma}
        \right) 
					dh=
				d\sum_{\gamma'\in \langle \gamma\rangle}\;
        \int_{C^\gamma} \tr_X
				\left(
           \Bigl.h^*\bigl(
              {z}^*(\omega_X(\gamma'))
             \bigr)\bigr|_{T^*X^\gamma}
         \right) 
					dh
\\
					=d\tau^\gamma_X(\omega_X).
\end{multline*}

\end{proof}

\paragraph{Definition of the Chern character.}
Consider an elliptic operator $(\mathcal{D}, \mathcal{P}_1, \mathcal{P}_2)$. For brevity, we frequently denote it by $\mathcal{D}$. We extend the interior symbols $\sigma_M(\mathcal{D}), \sigma_M(\mathcal{R})$ of the original operator and its almost inverse to $T^*M$ as smooth symbols, which have the transmission property. We extend the boundary symbols
$\sigma_X(\mathcal{D})$ and $\sigma_X(\mathcal{R})$ to $T^*X$ as smooth symbols. We denote such extensions by
$$
a,r\in C^\infty(T^*M)\rtimes \Gamma, \qquad a_X,r_X\in   \widetilde\Sigma_X  \rtimes \Gamma.
$$
Suppose that these extensions are compatible, i.e., the principal symbol of the boundary symbol is equal to the restriction of the interior symbol to the boundary and the following equalities hold:
$$
a=P_2aP_1,\quad r=P_1rP_2,\quad a_X=P'_2a_XP'_1,\quad r_X=P'_1r_XP'_2.
$$
We define the noncommutative connections
$$
\nabla_{P_j}=P_j\cdot d\cdot P_j \text{ on }T^*M\quad \text{ and }\quad
\nabla_{P'_j}=P'_j\cdot d'\cdot P'_j \text{ on }T^*X, \text{ where } j=1,2,
$$
and also the connection on $T^*M$
\begin{equation}
\label{nabla_M}
\widetilde{\nabla}_{P_1}=\nabla_{P_1}+r\nabla a, \text{ where } \nabla a \equiv \nabla_{P_2}a-a\nabla_{P_1}.
\end{equation}
Similarly, we define the connection on $T^*X$
$$
\widetilde{\nabla}_{P'_1}=\nabla_{P'_1}+r_X\nabla' a_X, \text{ where } \nabla' a_X =\nabla_{P'_2}a_X-a_X\nabla_{P'_1}.
$$ 
\begin{lemma}
The curvatures of connections $\widetilde{\nabla}_{P_1}$ and $\widetilde{\nabla}_{P'_1}$ are equal to
$$
\widetilde{\Omega}_{P_1}\overset{\mathrm{def}}{=}(\widetilde{\nabla}_{P_1})^2=\nabla_{P_1}^2+\nabla_{P_1}r\nabla a + (r\nabla a)^2,
$$
$$
\widetilde{\Omega}_{P'_1}\overset{\mathrm{def}}{=}(\widetilde{\nabla}_{P'_1})^2=\nabla_{P'_1}^2+\nabla_{P'_1}r_X\nabla' a_X + (r_X\nabla' a_X)^2.
$$
\end{lemma}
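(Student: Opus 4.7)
I would expand $(\widetilde{\nabla}_{P_1})^2=(\nabla_{P_1}+r\nabla a)^2$ as the square of a sum of two odd operators on the $\Omega(T^*M)\rtimes\Gamma$-module $P_1\cdot$: the degree-$1$ super-derivation $\nabla_{P_1}=P_1 dP_1$ and left wedge-multiplication by the $1$-form $r\nabla a$. Because both summands are odd, the usual identity for squares of odd operators gives
\[
(\widetilde{\nabla}_{P_1})^2=\nabla_{P_1}^2+\bigl\{\nabla_{P_1},\,r\nabla a\bigr\}+(r\nabla a)^2,
\]
and the only nontrivial task is to show that the middle (anticommutator) term equals the $2$-form $\nabla_{P_1}(r\nabla a)$ which is what is written as $\nabla_{P_1}r\nabla a$ in the statement.

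The middle term would be handled by the graded Leibniz rule for the Grassmann connection: for any form-valued endomorphism $\omega$ of degree $k$ of the $P_1$-module and any section $s$,
\[
\nabla_{P_1}(\omega\cdot s)=(\nabla_{P_1}\omega)\cdot s+(-1)^k\,\omega\cdot\nabla_{P_1}s.
\]
Applied with $\omega=r\nabla a$ of degree $1$, the section-dependent parts of $\nabla_{P_1}\circ(r\nabla a)$ and $(r\nabla a)\circ\nabla_{P_1}$ cancel and one is left with the pure $2$-form $\nabla_{P_1}(r\nabla a)$. The remaining two terms are already in the desired form: $\nabla_{P_1}^2$ is the Grassmann curvature of $P_1$ and $(r\nabla a)^2$ is simply the wedge-square of a $1$-form. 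Assembling these three pieces yields the claimed expression for $\widetilde\Omega_{P_1}$.

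For this expansion to be legitimate one uses the compatibility assumptions $a=P_2 aP_1$ and $r=P_1 rP_2$: these guarantee that $\nabla a$ sends $P_1$-sections to $\Omega^1\otimes(P_2\cdot)$ and that $r$ returns them to $\Omega^1\otimes (P_1\cdot)$, so that $r\nabla a$ is an honest degree-$1$ endomorphism of the $P_1$-module, which is what allows $\nabla_{P_1}$ to act on it via the Leibniz rule above. The boundary-symbol identity for $\widetilde\Omega_{P'_1}$ is then obtained by the identical computation, with $(\nabla_{P_1},a,r,d)$ replaced by $(\nabla_{P'_1},a_X,r_X,d')$ and $\Omega(T^*M)\rtimes\Gamma$ replaced by $(\widetilde\Sigma_X\otimes_{C^\infty(X)}\Omega(T^*X))\rtimes\Gamma$; nothing in the argument uses the specific algebra in which the entries live.

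The only real obstacle is notational bookkeeping: keeping straight which projection module each of $a$, $r$, $\nabla a$, $r\nabla a$ acts between, and which connection ($\nabla_{P_1}$ or $\nabla_{P_2}$) is to be applied at each stage. Once the conventions for $\nabla$ on endomorphisms and the signs in the graded Leibniz rule are fixed, the verification becomes mechanical and no further analytical input is required.
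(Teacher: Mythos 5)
Your proposal is correct and takes essentially the same route as the paper: expand $(\nabla_{P_1}+r\nabla a)^2$, isolate the cross (anticommutator) term, and identify it via the graded Leibniz rule with the covariant derivative $\nabla_{P_1}(r\nabla a)$ of the one-form. The paper's own computation is more terse --- it simply drops the term $r(\nabla a)\nabla_{P_1}$ in passing to the final line, leaving the Leibniz-rule cancellation implicit --- so your version is, if anything, a slightly more careful rendering of the same argument, and your remark on the role of $a=P_2aP_1$, $r=P_1rP_2$ in making $r\nabla a$ an honest endomorphism of the $P_1$-module is a useful addition not spelled out in the paper.
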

\begin{proof}
Indeed, we have
$$
\widetilde{\Omega}_{P_1}u=(\widetilde{\nabla}_{P_1})^2u=(\nabla_{P_1}+r\nabla a)^2u=
(\nabla_{P_1}^2+\nabla_{P_1}r\nabla a + r(\nabla a)\nabla_{P_1} + (r\nabla a)^2)u
$$
$$
=(\nabla_{P_1}^2+\nabla_{P_1}r\nabla a + (r\nabla a)^2)u,
$$
$$
\widetilde{\Omega}_{P'_1}v=(\widetilde{\nabla}_{P'_1})^2v=(\nabla_{P'_1}+r_X\nabla' a_X)^2v=
(\nabla_{P'_1}^2+\nabla_{P'_1}r_X\nabla' a_X + r_X(\nabla' a_X)\nabla_{P'_1} + (r_X\nabla' a_X)^2)v
$$
$$
=(\nabla_{P'_1}^2+\nabla_{P'_1}r_X\nabla' a_X + (r_X\nabla' a_X)^2)v.
$$
\end{proof}
Let us define the differential forms with compact supports
\begin{equation}\label{eq-forms1}
 \ch_{T^*M}^\gamma\sigma(\mathcal{D}) \in \Omega^{ev}_c(T^*M^\gamma),\quad \ch^\gamma_{T^*X}\sigma(\mathcal{D})\in \Omega^{ev}_c(T^*X^\gamma)
\end{equation}
on the cotangent bundles of the submanifolds of fixed points by the formulas
\begin{equation}
\label{form_ch_M}
\ch^\gamma_{T^*M}\sigma(\mathcal{D})=\tau^{\gamma}\left(e^{-\widetilde{\Omega}_{P_1}/2\pi i}(P_1-ra)\right)- 
\tau^{\gamma}\left(P_2e^{-\nabla^2_{P_2}/2\pi i}-ae^{-\widetilde{\Omega}_{P_1}/2\pi i}r\right)
\end{equation}
\begin{equation}
\label{form_ch_X}
\ch^\gamma_{T^*X}\sigma(\mathcal{D})=\tau^\gamma_X\left(e^{-\widetilde{\Omega}_{P'_1}/2\pi i}(P'_1-r_Xa_X)\right)- 
\tau^\gamma_X\left(P'_2e^{-\nabla^2_{P'_2}/2\pi i}-a_Xe^{-\widetilde{\Omega}_{P'_1}/2\pi i}r_X\right).
\end{equation}
Here and below we denote the extensions of mappings \eqref{eq-tr1} and \eqref{eq-tr2} to the matrix algebras over the  corresponding crossed products again by $\tau^\gamma, \tau_X^\gamma$. The extensions are obtained as the compositions of the matrix trace and mappings \eqref{eq-tr1} and \eqref{eq-tr2}. 
Since $\tau^\gamma$ is a graded trace, \eqref{form_ch_M} can be written as
$$
\ch^\gamma_{T^*M}\sigma(\mathcal{D})=\tau^{\gamma}\left(e^{-\widetilde{\Omega}_{P_1}/2\pi i}P_1 - 
P_2e^{-\nabla^2_{P_2}/2\pi i}\right).
$$

The boundary $\partial(T^*M^\gamma)\simeq T^*X^\gamma\times\mathbb{R}$ is fibered over $T^*X^\gamma$ with the fiber $\mathbb{R}$.
We denote the corresponding projection by $\pi^\gamma:\partial(T^*M^\gamma)\to T^*X^\gamma$ and the embedding
$\partial (T^*M^\gamma)\subset T^*M^\gamma$ by $i_\gamma$. Hence, the pair $(T^*M^\gamma,\pi^\gamma)$ is a manifold with fibered boundary in the sense of Sec. 4.
\begin{prop}
Given $\gamma\in\Gamma$, we have  
\begin{equation}
\label{eq-fund1}
d\left(\ch_{T^*M}^\gamma\sigma(\mathcal{D})\right)=0,
\end{equation}
\begin{equation}
\label{eq-fund1_1}
d'\left(\ch_{T^*X}^\gamma\sigma(\mathcal{D})\right)=\pi^\gamma_*i_\gamma^*\left(\ch_{T^*M}^\gamma\sigma(\mathcal{D})\right).
\end{equation}
In other words, the pair   $(\ch_{T^*M}^\gamma\sigma(\mathcal{D}),-\ch_{T^*X}^\gamma\sigma(\mathcal{D}))$  is closed in the complex $(\Omega_c^*(T^*M^\gamma,\pi^\gamma),\partial)$, see \eqref{eq-c1}, and we denote its cohomology class  by
$$
 \ch^\gamma\sigma(\mathcal{D}) \in H^{ev} (T^*M^\gamma,\pi^\gamma).
$$
This class does not depend on the choice of the elements $a,r,a_X,r_X$ and does not change under homotopies of elliptic symbols.
\end{prop}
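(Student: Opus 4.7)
The plan is to verify the four ingredients of the statement in sequence: closedness of $\ch^\gamma_{T^*M}\sigma(\mathcal{D})$, the boundary identity for $\ch^\gamma_{T^*X}\sigma(\mathcal{D})$, independence of the extensions $a,r,a_X,r_X$, and homotopy invariance. The first follows from the standard noncommutative Chern--Weil machinery applied to the trace $\tau^\gamma$; the second is the heart of the statement and relies crucially on the trace defect formula \eqref{FedLem0}; the last two are absorbed into a single transgression argument.

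For \eqref{eq-fund1}, I would first observe that $\tau^\gamma\circ d = d\circ \tau^\gamma$ on $\Omega(T^*M)\rtimes\Gamma$, which is immediate from \eqref{eq-sled-nash1} because the pullbacks along $z$ and $h$, the restriction to $T^*M^\gamma$, the sum over the conjugacy class, and integration over the compact Lie group $C^\gamma$ all commute with the exterior differential. Using the simplified expression $\tau^\gamma\bigl(e^{-\widetilde{\Omega}_{P_1}/2\pi i}P_1 - P_2 e^{-\nabla^2_{P_2}/2\pi i}\bigr)$, I differentiate with the Leibniz rule for the noncommutative connections $\widetilde{\nabla}_{P_1}$ and $\nabla_{P_2}$ on the crossed product algebra. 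The Bianchi identities $[\widetilde{\nabla}_{P_1},\widetilde{\Omega}_{P_1}]=0$ and $[\nabla_{P_2},\nabla_{P_2}^2]=0$ let me rewrite $d$ of each term as a graded commutator with the appropriate connection; these commutators vanish under $\tau^\gamma$ by \eqref{eq-trcl1}.

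The key step is \eqref{eq-fund1_1}. Here I would repeat the analogous computation for $\ch^\gamma_{T^*X}\sigma(\mathcal{D})$ using the chain-map property \eqref{eq-trcl2}, but now $\tau^\gamma_X$ is \emph{not} a graded trace: its failure on a commutator in $\widetilde{\Sigma}_X\rtimes\Gamma$ is governed precisely by \eqref{FedLem0}. After differentiating and reducing to commutators, the leftover is a sum (indexed by $\gamma'\in\langle\gamma\rangle$ and integrated over $C^\gamma$) of expressions of the form $-i\Pi'\bigl((\partial a_1/\partial \xi_n)\, a_2\bigr)$ in the principal symbols. By \eqref{Pi'u}, the functional $\Pi'$ is essentially $(2\pi)^{-1}\int_{\mathbb{R}}d\xi_n$, and under the identification $\partial(T^*M^\gamma)\simeq T^*X^\gamma\times\mathbb{R}$ with fiber coordinate $\xi_n$, this integration is precisely the push-forward $\pi^\gamma_*$ along the fiber composed with the restriction $i_\gamma^*$ to $\partial(T^*M^\gamma)$. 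Assembling these terms with the correct signs produces the right-hand side $\pi^\gamma_* i_\gamma^* \ch^\gamma_{T^*M}\sigma(\mathcal{D})$. I expect the main obstacle to be this identification: tracking signs and orientation conventions from Section~4, and verifying that the boundary principal symbols appearing in \eqref{FedLem0} coincide with the restrictions at $x_n=0$ of the interior extensions used in $\ch^\gamma_{T^*M}$.

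For independence of $(a,r,a_X,r_X)$ and homotopy invariance of elliptic symbols, I would apply a single transgression argument. Given a smooth path $(\mathcal{D}_t,\mathcal{P}_{1,t},\mathcal{P}_{2,t})$ of elliptic data together with smooth extensions $(a_t,r_t,a_{X,t},r_{X,t})$, I form connections on $T^*M\times[0,1]$ and $T^*X\times[0,1]$ which incorporate the $dt$-component and assemble the corresponding Chern forms. Integration along $[0,1]$ produces a primitive in $(\Omega_c^{*-1}(T^*M^\gamma,\pi^\gamma),\partial)$ whose boundary, by Stokes' theorem together with the identities \eqref{eq-fund1} and \eqref{eq-fund1_1} applied on the product manifold, equals the difference of the representatives at the two endpoints. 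Hence the class $\ch^\gamma\sigma(\mathcal{D})\in H_c^{ev}(T^*M^\gamma,\pi^\gamma)$ is well defined and homotopy invariant.
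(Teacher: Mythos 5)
Your proposal is correct and follows essentially the same route as the paper: closedness of $\ch^\gamma_{T^*M}\sigma(\mathcal{D})$ via the chain-map and graded-trace properties of $\tau^\gamma$, the boundary identity \eqref{eq-fund1_1} via the trace-defect formula (the $\Gamma$-equivariant version of \eqref{FedLem0}) together with the identification of $\Pi'$ with $\frac{1}{2\pi}\int_{\mathbb{R}}d\xi_n$, i.e.\ with $\pi^\gamma_*i_\gamma^*$, and then a single transgression argument over $[0,1]$ that simultaneously gives homotopy invariance and independence of the extensions. The one technical ingredient you do not name, but which the paper isolates as Lemma~\ref{omega_P1}, is that one still needs $\tau^\gamma_X$ to satisfy a partial cyclicity, namely $\tau^\gamma_X(\omega'P'_1)=\tau^\gamma_X(P'_1\omega')$ for $P'_1$ acting as a scalar in $\xi_n$, in order to pass from $d'\tau^\gamma_X$ to $\tau^\gamma_X\nabla_{P'_1}$ before the trace defect is invoked; this is consistent with, but not subsumed by, your observation that $\tau^\gamma_X$ fails to be a graded trace in general.
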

\begin{proof}
1. Equality \eqref{eq-fund1} can be proven in a standard way (see, for instance, \cite{Zha3})
\begin{multline*}
d\left(\ch_{T^*M}^\gamma\sigma(\mathcal{D})\right) =  
d\tau^{\gamma}\left(e^{-\widetilde{\nabla}^2_{P_1}/2\pi i}P_1\right) - d\tau^{\gamma}\left(P_2e^{-{\nabla^2_{P_2}}/2\pi i}\right) 
\\
=\tau^{\gamma}\left(\widetilde{\nabla}_{P_1}\left(e^{-\widetilde{\nabla}^2_{P_1}/2\pi i}P_1\right)\right) - \tau^{\gamma}\left(\nabla_{P_2}\left(P_2e^{-{\nabla^2_{P_2}}/2\pi i}\right)\right) 
\\
=\tau^{\gamma}\left[\widetilde{\nabla}_{P_1},e^{-\widetilde{\nabla}^2_{P_1}/2\pi i}P_1\right] - \tau^{\gamma}\left[\nabla_{P_2},P_2e^{-{\nabla^2_{P_2}}/2\pi i}\right] = 0.
\end{multline*}
The last equality holds, since the commutators used in it are equal to zero.

2. Next, let us prove equality \eqref{eq-fund1_1}. First, we calculate the right hand side in \eqref{eq-fund1_1}. We denote the restriction of the curvature form $\widetilde{\Omega}_{P_1}$ to $\partial T^*M$ by $\Omega_1=\widetilde{\Omega}_{P_1}|_{\partial T^*M}$. It is clear that $\Omega_1$ is equal to the curvature form for the pair of restrictions $(a|_{\partial T^*M},r|_{\partial T^*M})$. Also we denote by $\Omega_2=\Omega_2(\xi_n)=\widetilde{\Omega}_{P_1}|_{\partial T^*X}$  the family of curvature forms for the restrictions $(a|_{\partial T^*M\cap\{\xi_n={\rm const}\}},r|_{\partial T^*M\cap\{\xi_n={\rm const}\}})$, where $\xi_n$ is considered as a parameter. We have
$$
i^*_\gamma\ch^\gamma_{T^*M}\sigma(\mathcal{D}) = \tau^\gamma\left(e^{-\Omega_1/2\pi i}P_1 - P_2e^{-{\nabla^2_{P_2}}/2\pi i}\right),
$$
\begin{equation}
\label{right_part_1_2}
\pi^\gamma_*i^*_\gamma\ch^\gamma_{T^*M}\sigma(\mathcal{D})=-\frac{1}{2\pi i}\int_{\mathbb{R}}{\tau^\gamma\left(\left(\frac{\partial}{\partial\xi_n}\lrcorner\Omega_1\right)
e^{-\Omega_2/2\pi i}\right)d\xi_n}.
\end{equation}
Here $\frac{\partial}{\partial\xi_n}\lrcorner\Omega$ is the substitution of the vector field into the differential form. By Lemma 1, we have
\begin{equation}
\label{substit}
\frac{\partial}{\partial\xi_n}\lrcorner\Omega_1 = \frac{\partial}{\partial\xi_n}\lrcorner
\left(\nabla_{P_1}^2+\nabla_{P_1}(r\nabla a) + (r\nabla a)^2\right).
\end{equation}
We now substitute $\partial/\partial \xi_n$ into each of the summands in \eqref{substit}. To this end, we represent the connections in the following form:
$$
d = d\xi_n\frac{\partial}{\partial\xi_n} +d',\quad \nabla_{P_j}=P_jd'P_j + P_jd\xi_n\frac{\partial}{\partial\xi_n}\equiv 
\nabla'_{P_j} + P_jd\xi_n\frac{\partial}{\partial\xi_n}, \nabla=\nabla' + d\xi_n\frac{\partial}{\partial\xi_n}.
$$
For the first summand in \eqref{substit}, we have
\begin{equation}
\label{first_term}
\frac{\partial}{\partial\xi_n}\lrcorner\nabla_{P_1}^2 = \frac{\partial}{\partial\xi_n}\lrcorner(P_1dP_1)^2=0,
\end{equation}
since $P_1$ does not depend on $\xi_n$. 
For the second summand in \eqref{substit}, we have
\begin{multline}
\label{sec_term_exp}
\nabla_{P_1}(r\nabla a) = \left(\nabla'_{P_1} + P_1d\xi_n\frac{\partial}{\partial\xi_n}\right)
\left(r\nabla'a + r\frac{\partial a}{\partial\xi_n}d\xi_n\right) 
\\
=\nabla'_{P_1}(r\nabla'a) + \nabla'_{P_1}\left(r\frac{\partial a}{\partial\xi_n}d\xi_n\right) + 
\left(P_1d\xi_n\frac{\partial}{\partial\xi_n}\right)(r\nabla'a) + \left(P_1d\xi_n\frac{\partial}{\partial\xi_n}\right)
\left(r\frac{\partial a}{\partial\xi_n}d\xi_n\right) 
\\
=\nabla'_{P_1}(r\nabla'a) + \nabla'_{P_1}\left(r\frac{\partial a}{\partial\xi_n}\right)d\xi_n + 
d\xi_n\frac{\partial}{\partial\xi_n}(r\nabla'a).
\end{multline}
We now substitute $\partial/\partial \xi_n$ into  \eqref{sec_term_exp}:
\begin{multline}
\label{second_term}
\frac{\partial}{\partial\xi_n}\lrcorner \nabla_{P_1}(r\nabla a) = 
-\nabla'_{P_1}\left(r\frac{\partial a}{\partial\xi_n}\right)+ 
\frac{\partial}{\partial\xi_n}(r\nabla'a) 
\\
=-\left(\nabla'r\right)\frac{\partial a}{\partial\xi_n}
-r\nabla'\frac{\partial a}{\partial\xi_n}+ 
\frac{\partial r}{\partial\xi_n}\nabla'a +
r\nabla'\frac{\partial a}{\partial\xi_n}=
-\left(\nabla'r\right)\frac{\partial a}{\partial\xi_n}+ 
\frac{\partial r}{\partial\xi_n}\nabla'a.
\end{multline}
For the third summand in \eqref{substit}, we get
\begin{equation}
\label{third_term}
\frac{\partial}{\partial\xi_n}\lrcorner \left(\left(r\nabla'a + r\frac{\partial a}{\partial\xi_n}d\xi_n\right)\left(r\nabla'a + r\frac{\partial a}{\partial\xi_n}d\xi_n\right)\right)=
r\frac{\partial a}{\partial\xi_n}r\nabla'a- (r\nabla'a)r\frac{\partial a}{\partial\xi_n}.
\end{equation}
Substituting  \eqref{first_term}, \eqref{second_term}, \eqref{third_term} into \eqref{right_part_1_2}, we obtain
\begin{equation}
\label{right_part_1_1}
\pi^\gamma_*i^*_\gamma\ch^\gamma_{T^*M}\sigma(\mathcal{D})=\frac{i}{2\pi}\int_{\mathbb{R}}{\tau^\gamma\left(\left(
\frac{\partial r}{\partial\xi_n}\nabla'a
-\left(\nabla'r\right)\frac{\partial a}{\partial\xi_n}+ 
\left[r\frac{\partial a}{\partial\xi_n},r\nabla'a\right]
\right)e^{-\Omega_2/2\pi i}\right)d\xi_n}.
\end{equation}

3. Now we calculate the left hand side in \eqref{eq-fund1_1}. 

\begin{lemma}
\label{omega_P1}
For each form $\omega'\in \Mat_N((\tilde{\Sigma}_X\otimes_{C^\infty(X)}\Omega(T^*X) )\rtimes \Gamma)$ such that $\omega'=P'_1\omega'P'_1$, we have
\begin{equation}
\label{omega'_lemma}
d'\tau^\gamma_X(\omega')=\tau^\gamma_X(\nabla_{P'_1}\omega').
\end{equation}
\end{lemma}
\begin{proof}
The difference between the left and right hand sides in \eqref{omega'_lemma} is equal to 
\begin{multline}
\label{omega'_lemma_proof}
d'\tau^\gamma_X(\omega')-\tau^\gamma_X(\nabla_{P'_1}\omega')=\tau^\gamma_X(d'(P'_1\omega')-P'_1d'\omega')=\tau^\gamma_X((d'P'_1)\omega') 
\\
=\tau^\gamma_X((d'P'_1)P'_1\omega'P'_1)=\tau^\gamma_X(P'_1d'P'_1P'_1\omega')=0,
\end{multline}
where in the last line we used the cyclic property: $\tau^\gamma_X:\tau^\gamma_X(\omega'P'_1)=\tau^\gamma_X(P'_1\omega')$. The last equality holds, since $P'_1$ acts as a scalar operator in the variable $\xi_n$. In the last equality in \eqref{omega'_lemma_proof} we used the identity $P'_1(d'P'_1)P'_1=0$ for the projection $P'_1$. 

This completes the proof of the Lemma. 
\end{proof}

Using Lemma~\ref{omega_P1}, we obtain the following expression for the left hand side in \eqref{eq-fund1_1}
\begin{equation}
\label{d'_expansion}
d'\ch^\gamma_{T^*X}\sigma(\mathcal{D})=\tau^\gamma_X\left(\nabla_{P'_1}\big(e^{-\widetilde{\nabla}^2_{P'_1}/2\pi i}P'_1\big)\right)-
\tau^\gamma_X\left(\nabla_{P'_2}\big(P'_2e^{-\nabla^2_{P'_2}/2\pi i}\big)\right)+
d'\tau^\gamma_X\left[a_X, e^{-\widetilde{\nabla}^2_{P'_1}/2\pi i}r_X\right].
\end{equation}
For the first summand in \eqref{d'_expansion}, we have
\begin{multline*}
\nabla_{P'_1}\left(e^{-\widetilde{\nabla}^2_{P'_1}/2\pi i}P'_1\right) = 
\big[\nabla_{P'_1},e^{-\widetilde{\nabla}^2_{P'_1}/2\pi i}P'_1\big] = 
\big[(\nabla_{P'_1}+r_X\nabla'a_X)-r_X\nabla'a_X,e^{-\widetilde{\nabla}^2_{P'_1}/2\pi i}P'_1\big]
\\
=\big[\widetilde{\nabla}_{P'_1},e^{-\widetilde{\nabla}^2_{P'_1}/2\pi i}P'_1\big]
-
\big[r_X\nabla'a_X,e^{-\widetilde{\nabla}^2_{P'_1}/2\pi i}P'_1\big]=
-
\big[r_X\nabla'a_X,e^{-\widetilde{\nabla}^2_{P'_1}/2\pi i}P'_1\big].
\end{multline*}
For the second summand in \eqref{d'_expansion}, we obtain
\begin{equation*}
\nabla_{P'_2}\left(P'_2e^{-\nabla^2_{P'_2}/2\pi i}\right) =
\big[\nabla_{P'_2},P'_2e^{-\nabla^2_{P'_2}/2\pi i}\big] = 0.
\end{equation*}
Substituting the last two formulas into \eqref{d'_expansion}, we get
\begin{equation}
\label{beforeFed}
d'\ch^\gamma_{T^*X}\sigma(\mathcal{D})=-\tau^\gamma_X\left[r_X\nabla'a_X,e^{-\widetilde{\nabla}^2_{P'_1}/2\pi i}P'_1\right]+
d'\tau^\gamma_X\left[a_X, e^{-\widetilde{\nabla}^2_{P'_1}/2\pi i}r_X\right].
\end{equation}
Next, we use an analogue of~\eqref{FedLem0}.
\begin{lemma}
\label{FedLem}
For each form $\omega_{X,1},\omega_{X,2}\in\Mat_N(\tilde{\Sigma}_X\otimes_{C^\infty(X)}\Omega(T^*X) \rtimes \Gamma)$, we have
$$
\tau^\gamma_X[\omega_{X,1},\omega_{X,2}] = -i\Pi'\left(\tau^\gamma\left(\frac{\partial \omega_{X,1}}{\partial\xi_n}\omega_{X,2}\right)\right) = 
i\Pi'\left(\tau^\gamma\left(\omega_{X,1}\frac{\partial \omega_{X,2}}{\partial\xi_n}\right)\right).
$$
\end{lemma}
Now let us calculate the traces of the commutators in \eqref{beforeFed} using Lemma \ref{FedLem}:
\begin{multline}
\label{lem_Fed_appl}
$$
d'\ch^\gamma_{T^*X}\sigma(\mathcal{D})=i\Pi'\tau^\gamma\left(\left(\frac{\partial}{\partial\xi_n}(r\nabla' a)\right)e^{-\Omega_2/2\pi i}P_1\right) - 
id'\Pi'\tau^\gamma\left(\frac{\partial a}{\partial\xi_n}e^{-\Omega_2/2\pi i}r\right)
\\
=i\Pi'\tau^\gamma\left(\left(\frac{\partial}{\partial\xi_n}(r\nabla' a)\right)e^{-\Omega_2/2\pi i}\right) - 
i\Pi'\tau^\gamma\left[\widetilde{\nabla}'_{P_1},r\frac{\partial a}{\partial\xi_n}e^{-\Omega_2/2\pi i}\right]
\\
=i\Pi'\tau^\gamma\left(\left(\frac{\partial}{\partial\xi_n}(r\nabla' a) - 
\left[\widetilde{\nabla}'_{P_1},r\frac{\partial a}{\partial\xi_n}\right]\right)e^{-\Omega_2/2\pi i}\right)
\\
=i\Pi'\tau^\gamma\left(\left(\frac{\partial r}{\partial\xi_n}\nabla' a +
r\nabla'\frac{\partial a}{\partial\xi_n} - \nabla'_{P_1}\left(r\frac{\partial a}{\partial\xi_n}\right) -
\left[r\nabla'a,r\frac{\partial a}{\partial\xi_n}\right]\right)e^{-\Omega_2/2\pi i}\right)
\\
=i\Pi'\tau^\gamma\left(\left(\frac{\partial r}{\partial\xi_n}\nabla' a -
\left(\nabla'r\right)\frac{\partial a}{\partial\xi_n} +
\left[r\frac{\partial a}{\partial\xi_n},r\nabla'a\right]\right)e^{-\Omega_2/2\pi i}\right).
\end{multline}
Using \eqref{Pi'u}, equation \eqref{lem_Fed_appl} gives us
\begin{equation}
\label{left_part_1_new}
d'\ch^\gamma_{T^*X}\sigma(\mathcal{D})=
\frac{1}{2\pi}\int{\tau^\gamma\left(\left(\frac{\partial r}{\partial\xi_n}\nabla' a -
\left(\nabla'r\right)\frac{\partial a}{\partial\xi_n} +
\left[r\frac{\partial a}{\partial\xi_n},r\nabla'a\right]\right)e^{-\Omega_2/2\pi i}\right)d\xi_n}.
\end{equation}

Since the expressions in \eqref{right_part_1_1} and \eqref{left_part_1_new} are equal, we have desired equality \eqref{eq-fund1_1}.

4. Consider compatible   families of interior symbols $a_t,r_t over T^*M\times[0,1]$  and boundary symbols $a_{X,t},r_{X,t}$ over $T^*X\times[0,1]$, which smoothly depend on $t$. For such pairs of symbols, we consider the Chern forms $\ch^\gamma_{T^*M\times[0,1]}\sigma(\mathcal{D})$ and $\ch^\gamma_{T^*X\times[0,1]}\sigma(\mathcal{D})$. We represent the form $\ch^\gamma_{T^*M\times[0,1]}\sigma(\mathcal{D})$ as
\begin{equation}
\label{chTM_simplif}
\ch^\gamma_{T^*M\times[0,1]}\sigma(\mathcal{D})=dt\wedge\alpha+\beta,
\end{equation}
where $\alpha(t),\beta(t)\in\Omega(T^*M)$ are smooth families of forms. 
Here
$$
\beta(t_0)=\ch^\gamma_{T^*M\times\{t=t_0\}}\sigma(\mathcal{D}),\qquad \alpha=\frac{\partial}{\partial t}\lrcorner\ch^\gamma_{T^*M\times[0,1]}.
$$
By already proven statement 1 of the theorem, we have $d\ch^\gamma_{T^*M\times[0,1]}\sigma(\mathcal{D})=0$. Using  expansion \eqref{chTM_simplif}, we obtain
$$
d\ch^\gamma_{T^*M\times[0,1]}\sigma(\mathcal{D}) = -dt\wedge d\alpha + d\beta +dt\wedge\frac{\partial\beta}{\partial t}=0.
$$
Therefore, we obtain
$$
\frac{\partial\beta}{\partial t}=d\alpha,
$$
which gives us
$$
\beta(1)-\beta(0)=d\int\limits^1_0{\alpha(t)dt}.
$$
Now we use the expansion
\begin{equation}
\label{chTX_simplif}
\ch^\gamma_{T^*X\times[0,1]}\sigma(\mathcal{D})=dt\wedge\alpha_X+\beta_X,
\end{equation}
where $\alpha_X(t),\beta_X(t)\in\Omega(T^*X)$. 
Let us find $\pi^\gamma_*i^*_\gamma\ch^\gamma_{T^*M\times[0,1]}\sigma(\mathcal{D})$. We obtain
$$
i^*_\gamma\ch^\gamma_{T^*M\times[0,1]}\sigma(\mathcal{D}) = dt\wedge i^*_\gamma\alpha + i^*_\gamma\beta.
$$
\begin{equation}
\label{prop_2_1}
\pi^\gamma_*i^*_\gamma\ch^\gamma_{T^*M\times[0,1]}\sigma(\mathcal{D}) = -dt\wedge \pi^\gamma_*i^*_\gamma\alpha + \pi^\gamma_*i^*_\gamma\beta.
\end{equation}
Let us now find $d\ch^\gamma_{T^*X\times[0,1]}\sigma(\mathcal{D})$. Using expansion \eqref{chTX_simplif}, we obtain
\begin{equation}
\label{prop_2_2}
d\ch^\gamma_{T^*X\times[0,1]}\sigma(\mathcal{D})=-dt\wedge d'\alpha_X+dt\wedge \frac{\partial\beta_X}{\partial t} + d'\beta_X.
\end{equation}
By virtue of proven statement 1 of the theorem, the left hand sides in~\eqref{prop_2_1} and~\eqref{prop_2_2} differ by a sign. That is why their right hand sides differ by a sign:
$$
- dt\wedge \pi^\gamma_*i^*_\gamma\alpha + \pi^\gamma_*i^*_\gamma\beta = dt\wedge d'\alpha_X - dt\wedge \frac{\partial\beta_X}{\partial t} - d'\beta_X,
$$
which gives us
$$
\frac{\partial\beta_X}{\partial t} = d'\alpha_X + \pi^\gamma_*i^*_\gamma\alpha.
$$
Integrating both sides in this equation, we obtain
$$
\beta_X(1) - \beta_X(0) = d'\int\limits_0^1{\alpha_X(t)dt} + \pi^\gamma_*i^*_\gamma\int\limits_0^1{\alpha(t)dt}.
$$
Thus, we obtain
\begin{equation}
\label{Prop_2_1}
\ch^\gamma_{T^*M}\sigma(\mathcal{D})(1)-\ch_{T^*M}\sigma(\mathcal{D})(0) = d\omega,
\end{equation}
\begin{equation}
\label{Prop_2_2}
\ch^\gamma_{T^*X}\sigma(\mathcal{D})(1) - \ch_{T^*X}\sigma(\mathcal{D})(0) = d'\omega_X + \pi^\gamma_*i^*_\gamma\omega,
\end{equation}
where 
$$
\omega = \int\limits_0^1{\alpha(t)dt},\qquad \omega_X = \int\limits_0^1{\alpha_X(t)dt}.
$$
Equalities \eqref{Prop_2_1} and \eqref{Prop_2_2} imply that the difference
$$
(\ch^\gamma_{T^*M\times[0,1]}\sigma(\mathcal{D})(1),-\ch^\gamma_{T^*X\times[0,1]}\sigma(\mathcal{D})(1)) - (\ch^\gamma_{T^*M\times[0,1]}\sigma(\mathcal{D})(0),-\ch^\gamma_{T^*X\times[0,1]}\sigma(\mathcal{D})(0))
$$
is a coboundary in the complex $(\Omega(T^*M^\gamma,\pi^\gamma),\partial)$. This proves the homotopy invariance of the Chern character. 

Let now $a_1, r_1, a_{X,1}, r_{X,1}$ be different extensions of the elliptic symbols to $T^*M$ and $T^*X$. Then we consider the homotopies
$$
a_t=a(1-t)+a_1\cdot t,\qquad r_t=r(1-t)+r_1\cdot t,
$$
$$
a_{X,t}=a_X(1-t)+a_{X,1}\cdot t,\qquad r_{X,t}=r_X(1-t)+r_{X,1}\cdot t,
$$
where $t\in[0,1]$.
At $t=0$ we have the set $a,r,a_X,r_X$, while at $t=1$ we have the set $a_1, r_1, a_{X,1}, r_{X,1}$. Thus the homotopy invariance gives independence of the choice of the extensions.
\end{proof}

\section{Index theorem}

To give the index formula, we need to define the necessary equivariant characteristic classes.
Firstly, we define the Todd forms on $M^\gamma$:
$$
\Td(T^*M^\gamma\otimes\mathbb{C})\overset{\mathrm{def}}{=}\det\left(
\frac{-\Omega^\gamma/2\pi i}{1-\exp(\Omega^\gamma/2\pi i)}\right)\in \Omega^{ev}(M^\gamma),
$$
where $\Omega^\gamma$ is the curvature form of the Levi-Civita connection on $M^\gamma$.
The Todd form $\Td(T^*X^\gamma\otimes\mathbb{C})$ on $X^\gamma$ is defined in a similar way. The pair of these forms is closed in the complex $(\widetilde{\Omega}^*(M^\gamma,\pi^\gamma),\widetilde\partial)$ (see \eqref{eq-c2}) and its cohomology class is denoted by
$$
\Td^\gamma(T^*M\otimes\mathbb{C}) \in \widetilde{H}^{ev}(M^\gamma,\pi^\gamma).
$$
Next, let $N^\gamma$ be the normal bundle of $M^\gamma\subset M$. Then we have the natural action of $\gamma$ on $N^\gamma$ and the following differential forms on $M^\gamma$:
$$
\ch^\gamma\Lambda( {N}^\gamma\otimes \mathbb{C})=\tr_{\Lambda^{ev}(N^\gamma)}\left(\gamma\exp(-\Omega/2\pi i)\right)- \tr_{\Lambda^{odd}(N^\gamma)}\left(\gamma\exp(-\Omega/2\pi i)\right)\Omega^{ev}(M^\gamma),
$$
where $\Omega$ is the curvature form of the exterior bundle $\Lambda(N^\gamma)$, $\gamma$ is considered as an endomorphism of the subbundles $\Lambda^{ev/odd}(N^\gamma)$ of even/odd forms and
$ 
\tr_{\Lambda^{ev/odd}(N^\gamma)} 
$ 
is the fiber-wise trace functional on endomorphisms of the bundles $\Lambda^{ev/odd}(N^\gamma)$. 
Similarly, let $N^\gamma_X$ be the normal bundle of $X^\gamma\subset X$. Then one can define the form $\ch^\gamma\Lambda( {N}_X^\gamma\otimes \mathbb{C})$ on $X^\gamma$ along the same lines.
The pair $(\ch^\gamma\Lambda( {N}^\gamma\otimes \mathbb{C}),\ch^\gamma\Lambda( {N}_X^\gamma\otimes \mathbb{C}))$ is closed in the complex $(\widetilde{\Omega}^*(M^\gamma,\pi^\gamma),\widetilde\partial)$. We denote its cohomology class by
$$
 \ch^\gamma\Lambda(\mathcal{N}^\gamma\otimes \mathbb{C})\in  \widetilde{H}^{ev}(M^\gamma,\pi^\gamma).
$$
The last class is invertible since its zero degree component is a nonzero complex number (see the proof in~\cite{AtSi1} or~\cite{NaSaSt17}).  

\begin{theorem}
\label{th-main1}
Let $\mathcal{D}$ be an elliptic operator in the sense of Definition \ref{def_elliptic}. Then the following index formula holds:
\begin{equation}\label{eq-indf1}
 \ind \mathcal{D}=\sum_{\langle \gamma\rangle\subset \Gamma}\langle \ch^\gamma\sigma(\mathcal{D})\wedge \Td^\gamma(T^*M\otimes\mathbb{C})\wedge
 \ch^\gamma\Lambda(\mathcal{N}^\gamma\otimes \mathbb{C})^{-1},[T^*M^\gamma,\pi^\gamma]\rangle,
\end{equation}
where the summation is over the conjugacy classes in $\Gamma$ and the series converges absolutely.
\end{theorem}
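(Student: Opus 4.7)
The plan is to prove the index formula in two stages: first reduce $\mathcal{D}$ by a stable homotopy to a model elliptic boundary value problem of very simple form near the boundary, and then reduce that model problem to an elliptic $\Gamma$-$\psi$DO on the double $2M$ of $M$, where the closed-manifold index formula of \cite{NaSaSt17} applies directly. Since both sides of \eqref{eq-indf1} are homotopy invariants of the elliptic symbol (the index by standard Fredholm theory, the right-hand side by the homotopy invariance of $\ch^\gamma\sigma(\mathcal{D})$ established in the Proposition of Sec.~5 and by the manifest homotopy invariance of the other factors), it suffices to verify \eqref{eq-indf1} on one representative of each stable homotopy class.

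\textbf{Reduction to a model problem.} First I would invoke a homotopy classification of elliptic $\Gamma$-Boutet de Monvel operators in the spirit of \cite{SaSt43}, which, after stabilization and addition of trivial summands, allows one to deform $\mathcal{D}$ to a problem in which the boundary symbol $\sigma_X(\mathcal{D})$ is built out of pseudodifferential data on $X$ together with the canonical projection $\Pi_+$ for the normal variable, with no genuine Green, boundary or coboundary parts. At this stage the boundary/coboundary/Green data of $\mathcal{D}$ have been pushed into interior pseudodifferential data satisfying the transmission property, and the operator is essentially determined by $\sigma_M(\mathcal{D})$ alone (together with the two projections $\mathcal{P}_1,\mathcal{P}_2$).

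\textbf{Doubling and application of the closed case.} For such a model operator I would perform the doubling construction: glue two copies of $M$ along $X$ into a closed $\Gamma$-manifold $2M$, and assemble $\sigma_M(\mathcal{D})$ with its transmission-consistent extension across $X$ into an elliptic $\Gamma$-pseudodifferential operator $\widetilde{\mathcal{D}}$ on $2M$ whose index equals $\ind\mathcal{D}$. On $\widetilde{\mathcal{D}}$ the closed-manifold index formula of \cite{NaSaSt17} gives
\begin{equation*}
\ind\widetilde{\mathcal{D}}=\sum_{\langle\gamma\rangle}\bigl\langle \ch^\gamma\sigma(\widetilde{\mathcal{D}})\wedge\Td^\gamma(T^*(2M)\otimes\mathbb{C})\wedge\ch^\gamma\Lambda(\mathcal{N}^\gamma\otimes\mathbb{C})^{-1},[T^*(2M)^\gamma]\bigr\rangle.
\end{equation*}
It then remains to rewrite each term of this sum as a pairing over $(T^*M^\gamma,\pi^\gamma)$ with the fundamental class of Sec.~4. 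Splitting the integral over $T^*(2M)^\gamma$ into the two halves over $T^*M^\gamma$ and applying the trace defect formula \eqref{FedLem0} and Lemma~\ref{FedLem} to the contribution coming from the gluing locus $T^*X^\gamma\times\mathbb{R}$, one recovers exactly the $-\ch^\gamma_{T^*X}\sigma(\mathcal{D})$ boundary component of $\ch^\gamma\sigma(\mathcal{D})$, paired via $\langle\cdot,[T^*M^\gamma,\pi^\gamma]\rangle$ against the restriction of $\Td^\gamma\wedge\ch^\gamma\Lambda(\mathcal{N}^\gamma\otimes\mathbb{C})^{-1}$ to $X^\gamma$.

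\textbf{Main obstacles.} The principal difficulty I anticipate is the last bookkeeping step: verifying that integration against $[T^*(2M)^\gamma]$ on the double, once split along $T^*X^\gamma$, matches the fibered-boundary pairing $\langle\cdot,[T^*M^\gamma,\pi^\gamma]\rangle$ of Sec.~4 with the correct sign $-(-1)^n$ on the boundary summand, and that the Todd class and the inverse equivariant Euler class of the normal bundle of $M^\gamma\subset M$ descend consistently from the closed double into the complex $(\widetilde{\Omega}^*(M^\gamma,\pi^\gamma),\widetilde\partial)$. A secondary but essential point is the absolute convergence of the sum over conjugacy classes, which should follow, as in \cite{NaSaSt17}, from the polynomial growth of $\Gamma$ together with the rapid decay of the Schwartz-type coefficients $\{\mathcal{D}_\gamma\}_{\gamma\in\Gamma}$ in the smooth crossed product, since each term of the sum grows at most polynomially in $|\gamma|$.
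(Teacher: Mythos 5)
Your overall strategy matches the paper's: both sides of \eqref{eq-indf1} are homomorphisms on $\Ell(M,\Gamma)$, so it suffices to check the formula on generators, and the key computational ingredient for the ``interior'' generators is to pass to the double $2M$ and invoke the closed-manifold index theorem of \cite{NaSaSt17}. Where your proposal goes wrong is in the description of what the homotopy classification actually delivers, and this is a genuine gap rather than a matter of presentation.

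You assert that one can deform an arbitrary elliptic $\Gamma$-Boutet de Monvel operator, after stabilization, to a problem ``essentially determined by $\sigma_M(\mathcal{D})$ alone,'' whose boundary symbol is the canonical $\Pi_+$-compression of the interior symbol, and then double. That reduction is not available in general: there is an obstruction (the Atiyah--Bott/Boutet de Monvel obstruction, here in the crossed-product setting) to deforming boundary symbols to ones induced from interior data. The paper's Theorem~\ref{th1} makes the correct structure explicit: $\Ell(M,\Gamma)\simeq \Ell(M^\circ,\Gamma)\oplus K_0(C^\infty(X)\rtimes\Gamma)$. The first summand consists of operators trivial in a neighborhood of the boundary, and only these are amenable to doubling. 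The second summand is generated by the special boundary value problems $[\mathcal{D}_P,\mathcal{P}_{1,P},\mathcal{P}_{2,P}]$ of \eqref{eq-bvp5}, built from $\Lambda_P=\chi(\partial_{x_n}(2P-1)+\Lambda_X)+(1-\chi)\Lambda_M$ with the boundary condition $Pi^*$. These carry genuine boundary conditions, are not trivial near $X$, and cannot be assembled into an operator on the double; your outlined doubling step simply does not apply to them. The paper disposes of this second summand by a separate and much softer argument: for the special problems both the analytical index and the topological index vanish (the analytical index because the problem is Fredholm of index zero by construction, the topological index because the Chern character of its symbol vanishes), so no doubling or bookkeeping on $T^*X^\gamma\times\mathbb{R}$ is needed there.

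Concretely, to repair your proof you would need to (i) replace the vague ``reduce to a model problem with no Green/boundary/coboundary data'' by the precise direct-sum decomposition of Theorem~\ref{th1}, (ii) restrict the doubling argument to the $\Ell(M^\circ,\Gamma)$ component, where the operator is already trivial near $X$ and the boundary contribution to $\ch^\gamma$ is zero so the matching with $[T^*M^\gamma,\pi^\gamma]$ is immediate, and (iii) add the separate vanishing argument for the special problems parametrizing $K_0(C^\infty(X)\rtimes\Gamma)$. As written, your ``main obstacle'' paragraph about matching $[T^*(2M)^\gamma]$ with $[T^*M^\gamma,\pi^\gamma]$ and recovering $-\ch^\gamma_{T^*X}$ via the trace-defect formula is directed at a computation the paper never has to perform, precisely because the decomposition of $\Ell(M,\Gamma)$ routes the boundary contributions into a summand that is killed by the vanishing argument. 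The remark on absolute convergence via polynomial growth and rapid decay of the crossed-product coefficients is correct and agrees with the paper.
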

To prove this theorem, we need to establish some auxiliary statements.

\section{Homotopy classification}
\paragraph{$\Ell$-groups.}

Let us denote the Abelian group of stable homotopy classes of   elliptic $\Gamma$-Boutet de Monvel operators~\eqref{eq-2a} by $\Ell (M,\Gamma)$. We recall (for details see~\cite{Sav8}) that two operators $(\mathcal{D},\mathcal{P}_1,\mathcal{P}_2)$ and $(\mathcal{D}',\mathcal{P}'_1,\mathcal{P}'_2)$ are called {\em stably homotopic}, if there exists a smooth homotopy of elliptic operators $(\mathcal{D}_t,\mathcal{P}_{1,t},\mathcal{P}_{2,t}), t\in[0,1]$ such that
$$
\left.(\mathcal{D}_t,\mathcal{P}_{1,t},\mathcal{P}_{2,t})\right|_{t=0}=(\mathcal{D},\mathcal{P}_1,\mathcal{P}_2)\oplus\rm{Triv},\qquad
\left.(\mathcal{D}_t,\mathcal{P}_{1,t},\mathcal{P}_{2,t})\right|_{t=1}=(\mathcal{D}',\mathcal{P}'_1,\mathcal{P}'_2)\oplus\rm{Triv}',
$$
where $\rm{Triv}, \rm{Triv}'$ are some trivial operators.
Here a {\em trivial operator} is an elliptic operator of type~\eqref{eq-2a}, in which the matrix operator $\mathcal{D}$ has components in the subalgebra
\begin{equation}
\label{subalgebra}
 (C^\infty(M) \oplus C^\infty(X))\rtimes \Gamma\subset  \Psi_B(M)\rtimes \Gamma.
\end{equation}
It can be proven in a standard way that stable homotopy is an equivalence relation on the set of elliptic operators \eqref{eq-2a}. Then the set of elliptic operators $(\mathcal{D},\mathcal{P}_1,\mathcal{P}_2)$ considered modulo stable homotopies is denoted by $\Ell(M,\Gamma)$. This set is an Abelian group with respect to the direct sum of operators. Here the equivalence class of operators $(\mathcal{D},\mathcal{P}_1,\mathcal{P}_2)$,
where $\mathcal{D}$ is a matrix operator over the algebra $(C^\infty(M) \oplus C^\infty(X))\rtimes \Gamma$, defines the zero element of the group. 

The aim of the current section is to obtain analogs of the Boutet de Monvel theorem, see~\cite{Bout2,MeNeSc1,MeScSc1}, which allows to construct homotopies of elliptic boundary value problems to a simple form in a neighborhood of the boundary. 
To state the corresponding result, we introduce some notations.

Firstly, we denote by $\Ell(M^\circ ,\Gamma)$ the group of stable homotopy classes of triples $(\mathcal{D},\mathcal{P}_1,\mathcal{P}_2)$ (see above), for which $\mathcal{D}$ has the components in subalgebra \eqref{subalgebra} in some neigborhood of the boundary $X\subset M$.

Secondly, for each projection $P\in \Mat_N(C^\infty(X)\rtimes \Gamma)$, we define a special boundary value problem.
Consider the matrix $N\times N$ operator (cf.~\cite[Corollary 20.3.1]{Hor3}) on $M$:
\begin{equation}
\label{eq-oper6}
\Lambda_P=\chi  \left( \frac \partial {\partial x_n}(2P-1)+ \Lambda_X\right)   + ( 1-\chi  )  \Lambda_M,
\end{equation}
where $\Lambda_M,\Lambda_X$ are non-negative elliptic $\psi$DOs of order 1 on $M$ and $X$ respectively,
$\chi\in C^\infty(M)$ is a non-negative function, identically equal to one in a neighborhood of the boundary and equal to zero outside a slightly larger neighborhood of the boundary. Here we suppose that $\Lambda_X$ commutes with the action of $\Gamma$ on $X$.
Let us consider the following boundary value problem for operator \eqref{eq-oper6}
\begin{equation}
\label{eq-bvp4}
\left(\begin{array}{c}
\Lambda_P\\
Pi^*
\end{array} \right):
H^1(M,\mathbb{C}^N)\longrightarrow 
\begin{array}{c}
L^2(M,\mathbb{C}^N)\\
\oplus\\
P H^{1/2}(X,\mathbb{C}^N).
\end{array}
\end{equation}

Similarly to \cite{ReSc1}, one can show that this boundary value problem is elliptic and its Fredholm index is equal to zero.
Now reduce problem \eqref{eq-bvp4} to a problem in $L^2$ spaces. To this end, we denote by $\Lambda_-$ operator \eqref{eq-bvp4} for $P=0$. Note that the latter operator is Fredholm without any boundary conditions. 
Now let us define the zero order boundary value problem:
\begin{equation}
\label{eq-bvp5}
\left(\begin{array}{c}
\Lambda_P(\Lambda_-)^{-1}\\
\Lambda_X^{1/2}Pi^*(\Lambda_-)^{-1}
\end{array} \right):
L^2(M,\mathbb{C}^N)\longrightarrow 
\begin{array}{c}
L^2(M,\mathbb{C}^N)\\
\oplus\\
P L^{2}(X,\mathbb{C}^N).
\end{array}
\end{equation}
This problem is Fredholm with zero index as the composition of $\Lambda_-^{-1}$, problem \eqref{eq-bvp4} and operator $\Lambda_X^{1/2}$.
 
We denote problem \eqref{eq-bvp5} by $[\mathcal{D}_P, \mathcal{P}_{1,P}, \mathcal{P}_{2,P}]$.

\begin{theorem}[homotopy classification]\label{th1}
The mapping
\begin{equation}\label{eq-1}
    \begin{array}{ccc}
     \Ell( M^\circ,\Gamma)\oplus K_0(C^\infty(X)\rtimes \Gamma) & \longrightarrow & \Ell(M,\Gamma)\\
     {}[\mathcal{D},\mathcal{P}_1,\mathcal{P}_2] \oplus [P] & \longmapsto & [\mathcal{D},\mathcal{P}_1,\mathcal{P}_2]\oplus [\mathcal{D}_P, \mathcal{P}_{1,P}, \mathcal{P}_{2,P}]
    \end{array} 
 \end{equation}
defines the group isomorphism.
\end{theorem}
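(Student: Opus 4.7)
The plan is to construct an inverse to the mapping~\eqref{eq-1}. Given an elliptic $\Gamma$-Boutet de Monvel operator $(\mathcal{D},\mathcal{P}_1,\mathcal{P}_2)$, the idea is to extract from its boundary symbol a class $[P]\in K_0(C^\infty(X)\rtimes\Gamma)$ and, after subtracting the canonical boundary value problem $(\mathcal{D}_P,\mathcal{P}_{1,P},\mathcal{P}_{2,P})$ associated to $P$, obtain an operator that is trivial in a neighborhood of the boundary. This inverse produces the splitting claimed in the theorem.

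First I would verify that~\eqref{eq-1} is well defined. The inclusion $\Ell(M^\circ,\Gamma)\hookrightarrow \Ell(M,\Gamma)$ is tautological. For the second summand, since the construction~\eqref{eq-oper6} depends continuously on $P$ in the Fr\'echet topology and ellipticity is preserved under homotopies of projections, equivalent projections yield stably homotopic boundary value problems, and direct sums of orthogonal projections correspond to direct sums of problems. Hence the assignment $[P]\mapsto [\mathcal{D}_P,\mathcal{P}_{1,P},\mathcal{P}_{2,P}]$ factors through $K_0$ and is a group homomorphism.

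For surjectivity, given an arbitrary elliptic $(\mathcal{D},\mathcal{P}_1,\mathcal{P}_2)$, I would study the boundary symbol $\sigma_X(\mathcal{D})$. The key step is to exploit the invertibility of this symbol modulo the ideal of compact-type (Green) operators in $\Sigma_X\rtimes\Gamma$ to produce, after stabilization, a homotopy to a boundary symbol of the form associated with~\eqref{eq-oper6}--\eqref{eq-bvp5} for some projection $P\in\Mat_N(C^\infty(X)\rtimes\Gamma)$. This is a crossed-product analogue of the classical Boutet de Monvel reduction and rests on the $K$-theoretic classification of the symbol algebra carried out in~\cite{SaSt43}. Lifting this symbol-level homotopy to the operator level (using that an elliptic operator is determined up to compact perturbation by its symbol) yields a representative of the form $(\mathcal{D}',\mathcal{P}_1',\mathcal{P}_2')\oplus(\mathcal{D}_P,\mathcal{P}_{1,P},\mathcal{P}_{2,P})$ with $\mathcal{D}'$ having components in the subalgebra~\eqref{subalgebra} near $X$.

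For injectivity, suppose a class $([\mathcal{D},\mathcal{P}_1,\mathcal{P}_2],[P])$ in the domain is mapped to zero via a stable homotopy in $\Ell(M,\Gamma)$. Since the first summand is trivial near the boundary, the boundary symbol of the direct sum depends only on $P$, and restricting the null-homotopy to the boundary produces a homotopy of projections in matrices over $C^\infty(X)\rtimes\Gamma$ that trivializes $[P]$ in $K_0$. Once $[P]=0$, a standard deformation argument pushes the remaining data off the boundary and shows that $[\mathcal{D},\mathcal{P}_1,\mathcal{P}_2]$ was already trivial in $\Ell(M^\circ,\Gamma)$. The main obstacle I anticipate is the symbol-level reduction in the surjectivity step: in the classical commutative setting one uses that topological $K$-theory classes of the cotangent bundle are represented by finite-rank vector bundles, whereas here one must construct the required projection in the smooth crossed product while simultaneously preserving the transmission property and the $\Gamma$-equivariance throughout the deformation.
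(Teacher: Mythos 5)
Your proposal takes a genuinely different route from the paper: you attempt a direct homotopy-theoretic construction of an inverse (extract a projection from the boundary symbol, subtract off the canonical problem, push the rest into the interior), whereas the paper converts the whole problem into a $K$-theory computation. The paper passes to $C^*$-closures via spectral invariance, identifies $\overline{\Ell}(M,\Gamma)$ with $K_0$ of the crossed product of a mapping cone of the symbol map (via~\cite{Sav8}), assembles a short exact sequence of such cones from the interior/boundary symbol diagram, computes the $K$-groups of the ideal and quotient (Lemma~\ref{lem4}), and finally compares the resulting six-term sequence with the direct sum of the exact sequence of the pair $T^*M^\circ\subset T^*M$ and the identity on $K_*(C(X)\rtimes\Gamma)$; the Five Lemma then yields the split isomorphism without ever exhibiting an explicit reduction of boundary symbols.

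The genuine gap in your argument is precisely at the step you yourself flag: the reduction of an arbitrary elliptic boundary symbol, up to stable homotopy, to one of the form~\eqref{eq-bvp5} associated with a projection $P\in\Mat_N(C^\infty(X)\rtimes\Gamma)$. You invoke this both for surjectivity and, implicitly, for injectivity (a stable null-homotopy in $\Ell(M,\Gamma)$ runs through arbitrary elliptic boundary symbols, so its restriction to the boundary is not automatically a homotopy of projections — you would again need the reduction at each parameter value). You do not explain how to carry out this reduction: the noncommutative boundary symbol algebra $\Sigma_X\rtimes\Gamma$ does not come with the vector-bundle representability that makes the commutative Boutet de Monvel argument work, so the claim that the relevant class in $K_0$ is realized by a projection in matrices over $C^\infty(X)\rtimes\Gamma$, compatibly with $\Gamma$-equivariance and the transmission property throughout a deformation, is exactly what needs to be proved. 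Citing~\cite{SaSt43} acknowledges that the needed $K$-theoretic classification exists, but then your argument is not an alternative to the $K$-theoretic one — it presupposes it — and the explicit homotopy construction you outline is still missing.
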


\begin{proof} 

1. Let us move from Fr\'echet algebras $C^\infty(M),C^\infty(X),\Sigma,\Sigma_X,\Psi_B(M),\ldots$ to their closures up to $C^*$-algebras denoted by $C (M),C (X),\overline{\Sigma},\overline{\Sigma}_X,\overline{\Psi}(M),\ldots$.

Denote by $\overline{\Ell}(M,\Gamma)$ the group of stable homotopy classes of triples $(\mathcal{D},\mathcal{P}_1,\mathcal{P}_2)$ as in \eqref{eq-2a}, where the elements are chosen from $C^*$-algebras
$$
\mathcal{D}\in \Mat_N(\overline{\Psi}(M))\rtimes \Gamma, \quad \mathcal{P}_{1,2}\in \Mat_N(C (M)\oplus C (X))\rtimes \Gamma.
$$
Similarly, we define the group $\overline{\Ell}( M^\circ,\Gamma)$.

Since the considered Fr\'echet algebras are spectral invariant in the corresponding $C^*$-closures, the natural mappings
\begin{equation}
\label{eq-spec3}
\overline{\Ell}( M ,\Gamma)\longrightarrow  {\Ell}( M ,\Gamma) \quad\text{and}\quad \overline{\Ell}( M^\circ,\Gamma)\longrightarrow  {\Ell}( M^\circ,\Gamma)
\end{equation}
are isomorphisms of Abelian groups. Hence, to prove this theorem, it suffices to establish the group isomorphism
\begin{equation}
\label{eq-main-is1}
\overline{\Ell}(M^\circ,\Gamma)\oplus K_0(C (X)\rtimes \Gamma)   \longrightarrow   \overline{\Ell}(M,\Gamma).
\end{equation}

2. Let us express the group $\overline{\Ell}(M,\Gamma)$ in terms of the $K$-group of some $C^*$-algebra associated with the symbol algebra. Namely, by the results in~\cite{Sav8} we have the isomorphism of Abelian groups
\begin{equation}
\label{eq-17}
\overline{\Ell}(M,\Gamma )\simeq K_0\Bigl(\Con(C(M)\oplus C(X)\to \overline{\Sigma})\rtimes \Gamma\Bigr) ,
\end{equation}
where 
$$
\Con (A\to B)=\bigl\{(a,b(t))\in A\oplus C([0,1),B) \;\bigl|\; f(a)=b(0)\bigr\}
$$
is the cone of a homomorphism $f:A\to B$ of some $C^*$-algebras $A$ and $B$. The mapping $C(M)\oplus C(X)\to \overline{\Sigma}$ in \eqref{eq-17} is a monomorphism, which takes a pair of functions $f,g$ to the symbol $(f, {\rm diag}(f|_X,g))$.

For brevity, the $C^*$-algebra $\Con (C(M)\oplus C(X)\to \overline{\Sigma})$ is denoted by $\mathcal{A}$.

3. Denote by $\overline{\Sigma}_0\subset \overline{\Sigma}$ the ideal of all symbols with zero interior symbol. We consider the commutative diagram
\begin{equation}\label{diag1}
 \begin{array}{rcccccl}
  0\longrightarrow &\overline{\Sigma}_0  & \longrightarrow & \overline{\Sigma} &\stackrel{\sigma_{int}}\longrightarrow & C(S^*_{tr}M)& \longrightarrow 0\\
  &\uparrow & & \uparrow & & \uparrow \\
  0\longrightarrow &C(X)  & \longrightarrow & C(M)\oplus C(X) &\longrightarrow & C(M) &\longrightarrow 0,
 \end{array}
\end{equation}
where the space $S^*_{tr}M$ is obtained from the cosphere bundle $S^*M$ by identifying pairs of points $(x',0,0,\pm 1)$ on its boundary.
The diagram \eqref{diag1} gives the short exact sequence
\begin{equation}\label{eq-cones1}
 0\to {\rm Con}(  C(X)\to \overline{\Sigma}_0)\rtimes \Gamma\longrightarrow \mathcal{A} \rtimes \Gamma 
 \longrightarrow \Con (C(M) \to C(S^*_{tr} M))\rtimes \Gamma\to 0
\end{equation}
of crossed products of cones of vertical mappings in \eqref{diag1}.
Then  \eqref{eq-cones1} gives a periodic exact sequence of $K$-groups
\begin{multline}\label{eq-conesk1}
 \ldots \to K_0\bigl(\Con(  C(X)\to \overline{\Sigma}_0)\rtimes \Gamma\bigr)\longrightarrow K_0 (\mathcal{A}\rtimes \Gamma)  
 \longrightarrow  K_0(\Con(C(M) \to C(S^*_{tr}M))\rtimes \Gamma) \\
 \longrightarrow K_1\bigl( \Con (  C(X)\to \overline{\Sigma}_0)\rtimes \Gamma\bigr)
 \to \ldots
\end{multline}

4. Now let us calculate the $K$-groups  in \eqref{eq-conesk1}.
\begin{lemma}\label{lem4}
We have group isomorphisms
\begin{equation}
 \label{eq-kth1}
 K_*\bigl(\Con(  C(X)\to \overline{\Sigma}_0)\rtimes \Gamma\bigr)\simeq K_*(C_0(T^*X)\rtimes \Gamma ),
\end{equation}
\begin{equation}
 \label{eq-kth2}
 K_*(\Con(C(M) \to C(S^*_{tr}M))\rtimes \Gamma)\simeq K_*(C_0(T^*M)\rtimes G)\oplus K_*(C(X)\rtimes \Gamma).
\end{equation}
\end{lemma}
\begin{proof}
To begin with, we construct isomorphism \eqref{eq-kth1}. The isomorphism of $C^*$-algebras $\overline{\Sigma}_0\simeq C(S^*X,\mathcal{K})$  implies the desired isomorphism:
\begin{multline*}
K_*\bigl(\Con(  C(X)\to \overline{\Sigma}_0)\rtimes \Gamma\bigr)\simeq
K_*\bigl(\Con(  C(X)\to C(S^*X,\mathcal{K}))\rtimes \Gamma\bigr) \\
\simeq K_*\bigl(\Con(  C(X)\to C(S^*X))\rtimes \Gamma\bigr)
\simeq K_*\bigl( C_0(T^*X)\rtimes \Gamma\bigr),
\end{multline*}
where we used the isomorphism of $C^*$-algebras $\Con(C(X)\to C(S^*X))\simeq C_0(T^*X)$ endowed with $\Gamma$ actions.

Isomorphism \eqref{eq-kth2} can be constructed similarly.
\end{proof}
  
5. Using Lemma~\ref{lem4},  we can write sequence~\eqref{eq-conesk1} as
\begin{multline}\label{eq-conesk3}
 \ldots \to K_1(C_0(T^*M)\rtimes \Gamma)\oplus K_1(C(X)\rtimes \Gamma)\stackrel\partial\longrightarrow  K_0(C_0(T^*X)\rtimes \Gamma)  \longrightarrow 
 K_0(\mathcal{A}\rtimes \Gamma)\\
 \longrightarrow  K_0(C_0(T^*M)\rtimes \Gamma)\oplus K_0(C(X)\rtimes \Gamma)\stackrel\partial\longrightarrow  K_1(C_0(T^*X)\rtimes \Gamma)  \longrightarrow \ldots
\end{multline}
Here the boundary mappings $\partial$ are the compositions
$$
 K_*(C_0(T^*M)\rtimes \Gamma)\oplus K_*(C(X)\rtimes \Gamma)\longrightarrow  K_*(C_0(T^*M)\rtimes \Gamma) \longrightarrow K_{*+1}(C_0(T^*X)\rtimes \Gamma)
$$
of projections to the first summand and restriction to the boundary $T^*M|_X\simeq T^*X\times\mathbb{R}$.

6. Consider the exact sequence
\begin{multline}\label{eq-conesk3b}
     \to K_1(C_0(T^*M)\rtimes \Gamma)\oplus K_1(C(X)\rtimes \Gamma)\stackrel\partial\longrightarrow  K_0(C_0(T^*X)\rtimes \Gamma) 
		\rightarrow
 K_0(C_0(T^* M^\circ)\rtimes \Gamma) \oplus K_0(C (X)\rtimes \Gamma)  
\\
 \longrightarrow K_0(C_0(T^*M)\rtimes \Gamma)\oplus K_0(C(X)\rtimes \Gamma)\stackrel\partial\longrightarrow  K_1(C_0(T^*X)\rtimes \Gamma)  \longrightarrow \ldots,
\end{multline}
which represents the direct sum of the exact sequence of the pair
$$
 C_0(T^* M^\circ)\rtimes \Gamma\subset C_0(T^*M^\circ )\rtimes \Gamma
$$  
and the sequence
$0\to K_*(C(X)\rtimes \Gamma)\stackrel{id}\to K_*(C(X)\rtimes \Gamma)\to 0$. We consider \eqref{eq-conesk3b} as upper row in the commutative diagram
\begin{multline}\label{diag5}
   \begin{array}{ccccc}
   K_1(C_0(T^*(M\cup X))\!\rtimes\! \Gamma) &  \stackrel\partial\to    K_0(C_0(T^*X)\!\rtimes\! \Gamma)   \to  \!\!\!\!&
 K_0(C_0(T^*M^\circ\cup X  )\!\rtimes\! \Gamma) & \to        \\
\downarrow &  \downarrow   & \downarrow &    \\   
K_1(C_0(T^*(M\cup X))\!\rtimes\! \Gamma) &   \stackrel\partial\to   K_0(C_0(T^*X)\!\rtimes\! \Gamma)   \to \!\!\!\!&
    K_0(\mathcal{A}\!\rtimes\! \Gamma)  &  \to  
    \end{array}\\
    \begin{array}{cc}
     \to  K_0(C_0(T^*(M\cup X))\!\rtimes\! \Gamma)  \stackrel\partial\to \!\!\!\!& K_1(C_0(T^*X)\!\rtimes\! \Gamma)     \\
   \downarrow    & \downarrow\\   
 \to   K_0(C_0(T^*(M\cup X))\!\rtimes\! \Gamma)  \stackrel\partial\to \!\!\!\!& K_1(C_0(T^*X)\!\rtimes\! \Gamma).  
    \end{array}
\end{multline}
The vertical mappings in this diagram (except the middle one) are identity mappings. Hence, using Lemma \ref{lem4}, we obtain from the diagram that the middle mapping is an isomorphism:
$$ 
 K_0 ( \mathcal{A}\rtimes \Gamma)\simeq    K_0(C_0(T^*M^\circ  )\rtimes \Gamma) \oplus K_0(C (X)\rtimes \Gamma).
$$  
Isomorphisms \eqref{eq-spec3} and \eqref{eq-17} give the statement in Theorem~\ref{th1}.
\end{proof}

\section{Proof of the index theorem}

Let us now prove Theorem~\ref{th-main1}.

1. We claim that the left and right hand sides of the index formula \eqref{eq-indf1} define homomorphisms of Abelian groups
\begin{equation}
\label{index_th_def}
\ind,\ind_t: \Ell(M,\Gamma) \longrightarrow \mathbb{C}.
\end{equation}
Indeed, the analytical index is invariant with respect to homotopies of elliptic operators. It is equal to zero in the case of trivial elliptic operators, since trivial operators are invertible. That is why the analytical index $\ind$ does not change under stable homotopies and  it defines a group homomorphism 
$$
\ind: \Ell(M,\Gamma)\to\mathbb{Z}.
$$ 

On the other hand, the topological index is also invariant with respect to homotopies of elliptic symbols. It is equal to zero for the trivial operators, since the Chern character of the symbol of such operators is equal to zero.

2. By Theorem~\ref{th1}, it is sufficient to prove the equality of the indices a) in the case of operators in $\Ell(M^\circ,G)$ (i.e., operators trivial in a neighborhood of the boundary); b) in the case of special boundary value problems \eqref{eq-bvp5}.

3. Case a): since the operator is trivial in a neighborhood of the boundary, it can be extended to the double of the manifold preserving the analytical index. In this case \eqref{eq-indf1} follows from the index theorem in~\cite{NaSaSt17}.

4. Case b): the analytical index is equal to zero for special boundary value problems \eqref{eq-bvp5}. The topological index is also equal to zero.

5. By 3. and 4. functionals \eqref{index_th_def} are equal on all elements, which generate the group $\Ell(M,\Gamma)$. Hence, these functionals are equal. This completes the proof of the index theorem.


\end{document}